\tikzstyle{box} = [rectangle, text centered, text width=3cm, draw=black]
\tikzstyle{arrow} = [thick,->,>=stealth]
\newtheorem{theorem}{Theorem}
\numberwithin{theorem}{section}
\newtheorem{corollary}[theorem]{Corollary}
\newtheorem{lemma}[theorem]{Lemma}
\newtheorem{proposition}[theorem]{Proposition}
\theoremstyle{remark}
	\newtheorem*{remark}{Remark}
\theoremstyle{definition}
	\newtheorem{defn}[theorem]{Definition}
\newcommand{\cA}{\mathcal{A}}
\newcommand{\cB}{\mathcal{B}}
\newcommand{\cF}{\mathcal{F}}
\newcommand{\fT}{\mathfrak{T}}
\newcommand{\Z}{\mathbb{Z}} 
\newcommand{\R}{\mathbb{R}}
\newcommand{\SV}{\mathrm{SV}}
\newcommand\SVnorm[2]{\left\lVert#1\right\rVert_{\mathrm{SV}(#2)}}
\newcommand{\var}{\mathrm{var}}
\theoremstyle{plain}
\title{A Dobrushin-Lanford-Ruelle theorem for irreducible sofic shifts}
\author{Lu{\'i}sa Borsato}
\address{Departamento de Matem\'atica, Instituto de Matem\'atica e Estat\'istica, Universidade de S\~ao Paulo,
R. do Mat\~ao 1010, S\~ao Paulo, SP 05508-900, Brazil}
\email{luisabb@ime.usp.br}
\thanks{The first author is supported by grants 2018/21067-0 and 2019/08349-9, S\~ao Paulo Research Foundation (FAPESP)}
\author{Sophie MacDonald}
\address{Mathematics Department, University of British Columbia, 1984 Mathematics Road, Vancouver, British Columbia, Canada, V6T 1Z2}
\email{sophmac@math.ubc.ca}
\subjclass[2010]{Primary 37D35; Secondary 37B10}
\keywords{Gibbs measures, equilibrium measures, sofic shift, Lanford-Ruelle, Dobrushin}
\date{\today}
\begin{document}

\begin{abstract}
    We show that for a potential with summable variations on an irreducible sofic shift in one dimension, the equilibrium measures are precisely the shift-invariant Gibbs measures. The main tool in the proof is a preservation of Gibbsianness result for almost invertible factor codes on irreducible shifts of finite type, which we then extend to finite-to-one codes by applying the results about equilibrium measures.
\end{abstract}

\maketitle

\textit{After posting an earlier version of this preprint, we became aware of the work of Viviane Baladi \cite{baladi-1991-finitely-presented}, which obtains the same Dobrushin-Lanford-Ruelle result for H{\"older} potentials on finitely presented systems, of which irreducible sofic shifts are a particular case. We also became aware of the work of Haydn-Ruelle \cite{haydn-ruelle-1992-equivalence}, which does the same for expansive homeomorphisms with specification, of which mixing sofic shifts are a particular case. We hope that the self-contained symbolic approach of this paper, which leverages the  classical Dobrushin and Lanford-Ruelle theorems essentially as black boxes, may be accessible to a wider audience and may draw attention to the earlier work by Baladi and Haydn-Ruelle.}

\section{Introduction}

The  Dobrushin theorem establishes sufficient conditions on shift spaces $X$ and potentials $f \in C(X)$ such that every Gibbs measure for $f$ is an equilibrium measure for $f$. This theorem holds in any shift space, not necessarily of finite type, with a certain mixing condition known in the literature as condition (D) \cite{ruelle-2004-thermo}. This condition is implied, for instance, by strong irreducibility, and in this paper we only use the strongly irreducible case of the classical theorem.

The classical converse to the Dobrushin theorem is known as the Lanford-Ruelle theorem. To our knowledge, the most general natural hypothesis known for the Lanford-Ruelle theorem is the topological Markov property \cite{barbieri2018equivalence}, which is satisfied in particular by shifts of finite type. Examples are also known, however, of shift spaces which lack the topological Markov property, but for which the conclusion of the Lanford-Ruelle theorem nevertheless holds, at various levels of generality \cite{meyerovitch2013gibbs}. It is therefore desirable to extend the Lanford-Ruelle theorem beyond the class of shifts with the topological Markov property, in the hope of explaining such examples.

In this paper, we do not treat the examples in \cite{meyerovitch2013gibbs}, but we do prove a Lanford-Ruelle theorem for irreducible sofic shifts in one dimension (Theorem \ref{sofic-lf}), which generally lack the topological Markov property. This is related to a question of Kitchens-Tuncel (\cite{kitchens-tuncel-1985-sofic}, Remark 7.10(iii)). The proof relies on a preservation of Gibbsianness result for almost invertible factor codes on irreducible shifts of finite type (Proposition \ref{prop-pres-gibbs}), which we generalize to finite-to-one factor codes in Corollary \ref{irred-fin-1}. We prove Theorem \ref{sofic-lf} by lifting an equilibrium measure on a sofic shift to an equilibrium measure on a covering shift of finite type, which is Gibbs by the classical Lanford-Ruelle theorem, then concluding by Proposition \ref{prop-pres-gibbs} that the original equilibrium measure is Gibbs. Irreducibility of the sofic shift is essential: the Lanford-Ruelle theorem holds for shifts of finite type with no irreducibility assumption, but it is false in general for reducible sofic shifts. The simplest counterexample is the sunny-side-up shift (the set of sequences in $\{ 0,1 \}^{\Z}$ with at most a single $1$) with its unique shift-invariant measure.

We also extend the Dobrushin theorem to irreducible sofic shifts (Theorem \ref{sofic-dobrushin}), which generally lack the mixing properties hypothesized in the classical version. Here, our approach is based on the cyclic structure of an irreducible shift of finite type, combined with our other results.

Since the structure of this paper is somewhat involved, we outline the main components in the following flowchart. An arrow from box A to box B indicates that result A is cited in the proof of result B. The second through fourth rows consist of original results.

\vspace{1em}

{\centering

\begin{tikzpicture}[node distance=2cm]

% first row
\node (classic-lf) [box] {classical Lanford-Ruelle (Theorem \ref{lf})};

\node[right = 2.5cm of classic-lf] (classic-dob) [box] {classical Dobrushin (Theorem \ref{dobrushin})};

% second row
\node[below right = 0.5cm and -1.3cm of classic-lf] (ai-pres-gibbs) [box] {almost invertible preservation of Gibbsianness (Proposition \ref{prop-pres-gibbs})};

\node[below = 0.7cm of classic-dob] (sft-dob) [box] {Dobrushin for irreducible SFTs (Lemma \ref{dobrushin-irred-sft})};

\node[right = 0.3cm of sft-dob] (ai-lift-gibbs) [box]{almost invertible lifting of Gibbs measures (Proposition \ref{sofic-gibbs-pushfwd})};

% third row

\node[below right = 1cm and -2cm of sft-dob] (sofic-dob) [box] {sofic Dobrushin (Theorem \ref{sofic-dobrushin})};

\node[left= 3.5cm of sofic-dob] (sofic-lf) [box] {sofic Lanford-Ruelle (Theorem \ref{sofic-lf})};

% fourth row
\node[below right = 0.7cm and -1.5cm of sofic-lf] (fin-1-pres-gibbs) [box] {finite-to-one preservation of Gibbsianness (Corollary \ref{irred-fin-1})};

\node[right = 1cm of fin-1-pres-gibbs] (fin-1-lift-gibbs) [box] {finite-to-one lifting of Gibbs measures (Corollary \ref{lift-gibbs-fin-1})};

%% arrows

\draw [arrow] (classic-lf) -- (sofic-lf);
\draw [arrow] (classic-dob) -- (sft-dob);
\draw [arrow] (ai-pres-gibbs) -- (sofic-lf);
\draw [arrow] (ai-lift-gibbs) -- (sofic-dob);
\draw [arrow] (sft-dob) -- (sofic-dob);
\draw [arrow] (sofic-lf) -- (fin-1-pres-gibbs);
\draw [arrow] (sft-dob) -- (fin-1-pres-gibbs);
\draw [arrow] (sofic-dob) -- (fin-1-lift-gibbs);

\end{tikzpicture}
}

\section{Definitions, notations, and conventions}

\subsection{Symbolic dynamics}\label{symb-din-subsection}

Let $\cA$ be a finite set with the discrete topology, to be thought of as an alphabet, and $\cA^{\Z}$ be the \textit{full shift} with the product topology, with respect to which $\cA^{\Z}$ is compact and metrizable. The group $\Z$ acts naturally on $\cA^{\Z}$ by the shift action $\sigma$, given by $(\sigma^n x)_0 = x_n$. A \textit{shift space} is any closed, $\sigma$-invariant subset $X \subseteq \cA^{\Z}$. 

For each $n \geq 1$, we write $\cB_{n}(X)$ to denote the set of words of length $n$ in the language $\cB(X)$ of $X$---that is, the set of patterns $w \in \cA^{n}$ such that $x_{[0,n-1]} = w$ for some $x \in X$. For $w \in \cA^{n}$ we denote by $[w]_{i}$ the set of $x \in X$ with $x_{[i,i+n-1]} = w$.

We will make extensive use of continuous, shift-equivariant factor codes $\pi: X \to Y$ between shift spaces $X$ and $Y$. By the Curtis-Hedlund-Lyndon theorem, any such map $\pi$ is a sliding block code, induced by a map $\Pi: \cB_m(X) \to \cB_1(Y)$ for some $m \geq 1$. Up to a conjugacy of $X$, we can in fact assume that $\Pi$ maps symbols to symbols, i.e., $m=1$ (\cite{lind1995introduction}, Proposition 1.5.12). When $\pi: X \to Y$ is surjective, it is known as a \textit{factor} code, and $Y$ is a \textit{factor} of $X$. 

Our main results in this paper concern shifts of finite type and sofic shifts, which we now define.

\begin{defn}\label{SFT-shift}[shift of finite type]
A \textit{shift of finite type} with alphabet $\cA$ is any shift space of $\cA^{\Z}$ defined by excluding a finite number of finite words. In other words, $X \subseteq \cA^{\Z}$ is a shift of finite type if for some $n \geq 1$ there exists a finite set $\cF \subseteq \cA^n$ of forbidden words such that
\begin{equation*}
    X = \{x \in \mathcal{A}^{\Z} \,:\, \sigma^m(x)_{[0,n-1]} \notin \mathcal{F}, \mbox{ for all } m \in \Z\}
\end{equation*}
\end{defn}

\begin{defn}\label{sofic-shift}[sofic shift]
A \textit{sofic shift} is any shift space that is a continuous factor of a shift of finite type.
\end{defn}

Sofic shifts have an alternative characterization in terms of bi-infinite walks on finite edge-labeled graphs (see \cite{lind1995introduction}), but we will not need this here. Every shift of finite type is sofic, since the identity code is continuous (more generally, conjugacies preserve the class of shifts of finite type), but not every sofic shift has finite type. An example of a shift that is sofic but not of finite type is the even shift, which is the shift $X \subset \{ 0,1 \}^{\Z}$ consisting of sequences in which $10^n1$ may appear only if $n$ is even.

We will, in particular, consider measures on shift spaces, which will always be Borel probability measures. We will refer to these as $\sigma$-invariant measures to avoid any possible ambiguity, since in \S \ref{eqm-section} we also consider measures that are $\sigma^p$-invariant for some positive power $p$, but in general are not $\sigma$-invariant. In particular, we will often refer to ergodic measures, and these will always be ergodic with respect to $\sigma$.

\subsection{The Gibbs relation, cocycles, and Gibbs measures}\label{gibbs-subsec}

The \textit{Gibbs relation} on a shift space $X$, also called the tail, asymptotic, or homoclinic relation, is the equivalence relation $\fT_X \subset X \times X$ such that $(x,y) \in \fT_X$ if and only if $x_{[-N,N]^c} = y_{[-N,N]^c}$ for some $N \geq 1$. For Borel sets $A, B \subseteq X$, a \textit{holonomy} of $\fT_X$ is a Borel isomorphism $\psi: A \to B$ such that $(x, \psi(x)) \in \fT_X$ for all $x \in A$. We say that a measure $\mu$ on $X$ is $\fT_X$-nonsingular if for every Borel $A \subset X$ with $\mu(A) = 0$, we have $\mu(\fT_X(A)) = 0$, where the saturation $\fT_X(A)$ is defined as 
\[
\fT_X(A) = \{ x' \in X \, : \, \exists \, x \in A \text{ such that } (x,x') \in \fT_X      \}
\]
Note that if $\mu$ is $\fT_X$-nonsingular and $\psi: A \to B$ is a holonomy of $\fT_X$, then whenever $E \subset A$ has $\mu(E) = 0$, we have $\mu(\psi(E)) \leq \mu(\fT_X(E)) = 0$. In particular, the Radon-Nikodym derivative $\frac{d(\mu \circ \psi)}{d\mu}$ is well-defined. 

We note that $\fT_{X}$ is generated by a countable group $\Gamma$ of holonomies, in the sense that $(x,x') \in \fT_{X}$ if and only if there exists $\gamma \in \Gamma$ with $\gamma(x) = x'$. This is a special case of the main theorem of \cite{feldman-moore-1977-equivalence}. One could choose $\Gamma$ consisting of holonomies of the form $\psi_{u,v, a, b}$, where $u,v \in \cB_{b-a + 1}(X)$, for some $a, b \in \mathbb{Z}$ with $a \leq b$, and
\[
\psi_{u,v, a, b}(x) = 
\begin{cases}
x_{(-\infty, a)} u x_{(b, \infty)}, & \text{ if } x_{[a,b]} = v \text{ and }  x_{(-\infty, a)} u x_{(b, \infty)} \in X \\
x_{(-\infty, a)} v x_{(b, \infty)}, & \text{ if } x_{[a,b]} = u \text{ and }  x_{(-\infty, a)} v x_{(b, \infty)} \in X \\
x, & \text{otherwise}
\end{cases}
\] 
That is, $\psi_{u,v, a, b}$ replaces $u$ with $v$, or vice versa, whenever possible, and otherwise does nothing.

A (real, additive) \textit{cocycle} on $\fT_X$ is a Borel measurable function $\phi: \fT_X \to \R$ such that $\phi(x,y) + \phi(y,z) = \phi(x,z)$ for all $x,y,z \in X$ with $(x,y), (y,z) \in \fT_X$ (so that $(x,z) \in \fT_X$ as well). By exponentiating or taking logarithms, we can easily convert between additive and multiplicative notation for cocycles. Additive notation is more natural when the cocycle is intended to represent an energy-difference function, and multiplicative notation is more natural when the cocycle serves as a Jacobian for a change of variables.

Given a $\fT_X$-nonsingular measure $\mu$ on $X$, we say that a Borel function $D_{\mu,\fT_X}: \fT_X \to \R^+$ is a (multiplicative) \textit{Radon-Nikodym cocycle} on $\fT_X$ with respect to $\mu$ if the pushforward of $\mu$ by any holonomy $\psi: A \to B$ of $\fT_X$ satisfies $\frac{d(\mu \circ \psi)}{d\mu}(x) = D_{\mu,\fT_X}(x, \psi(x))$ for $\mu$-a.e. $x \in A$. It is routine to show that any $\fT_X$-nonsingular measure $\mu$ on $X$ has a $\mu$-a.e. unique Radon-Nikodym cocycle. 

\begin{defn}\label{Gibbs-measure-defn}[Gibbs measure]
    Let $\mu$ be a $\fT_X$-nonsingular Borel measure on a shift space $X$, and let $\phi: \fT_X \to \R$ be a cocycle. We say that $\mu$ is a \textit{Gibbs measure} if for any holonomy $\psi: A \to B$ of $\fT_X$, and $\mu$-a.e. $x \in A$, we have $D_{\mu, \fT_X}(x,\psi(x)) = \exp(\phi(x,\psi(x)))$.
\end{defn}

To put it another way, a measure is by definition Gibbs if and only if it is nonsingular, and a nonsingular measure is Gibbs precisely with respect to the logarithm of its own Radon-Nikodym cocycle. These measures are also known as conformal measures in the literature \cite{borsato2020conformal, meyerovitch2013gibbs}.

In \cite{borsato2020conformal} it is shown, building on results of Kimura \cite{kimura-2015-thesis}, Keller \cite{keller1998equilibrium}, and others, that the definition of a Gibbs measure that we have given is equivalent to another well-known one involving the Dobrushin-Lanford-Ruelle equations, in terms of which the theorems of Dobrushin (Theorem \ref{dobrushin}) and Lanford-Ruelle (Theorem \ref{lf}) were originally stated. These Gibbs measures do not coincide, in general, with Gibbs measures in the sense of Bowen; see the second remark after Proposition \ref{prop-pres-gibbs} for further discussion.

\section{Preservation of Gibbsianness}\label{pres-gibbs-sec}

In this section, we prove a pair of preservation of Gibbsianness results, namely Propositions \ref{prop-pres-gibbs} and \ref{sofic-gibbs-pushfwd}, which are essential to our main theorems, Theorem \ref{sofic-lf} and \ref{sofic-dobrushin}. 

\begin{defn}\label{defn-irred-z} [irreducibility]
A shift space $X$ is irreducible if for every ordered pair of blocks $u, v \in \cB(X)$ there is $w \in \cB(X)$ so that $uwv \in \cB(X)$.
\end{defn}

\begin{defn}\label{defn-aper}[period]
The \textit{period} of an irreducible sofic shift is the greatest common divisor of the least periods of its periodic points.
\end{defn}

\begin{defn}\label{strong-irred-defn}[strong irreducibility]
A shift space $X$ is \textit{strongly irreducible} if there exists some $r \geq 1$ such that for any $u,v \in \cB(X)$ and any $s \geq r$, there exists $w \in \cB_s(X)$ with $uwv \in \cB(X)$.
\end{defn}

\begin{remark}
A shift of finite type is strongly irreducible if and only if it is topologically mixing, if and only if it is irreducible and has period $1$.
\end{remark}

The following proposition generalizes part of the proof of Lemma 4.1 in \cite{meester-steif-2001-higher-dim}. (They treat the case of a uniform Gibbs measure on a strongly irreducible shift of finite type over $\Z^d$.) The proof is also very similar to that of Proposition 5.2 in \cite{marcus-pavlov-2013-approx}.

\begin{proposition}\label{strong-irred-support}
Let $X$ be a strongly irreducible shift space. Then any $\fT_X$-invariant nonsingular measure on $X$ has full support.
\end{proposition}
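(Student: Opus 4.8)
The plan is to argue by contradiction, exploiting the fact that strong irreducibility lets us insert any admissible word into any point by a finite modification, so that the $\fT_X$-saturation of every nonempty cylinder is already the whole space. Recall that the cylinders $[v]_a$ form a basis for the topology of $X$, and that $[v]_a$ is nonempty exactly when $v \in \cB(X)$; hence $\mu$ has full support if and only if $\mu([v]_a) > 0$ for every $v \in \cB(X)$ and every $a \in \Z$. Suppose this fails, so that $\mu([v]_a) = 0$ for some word $v$ of length $\ell$ and some position $a$. I would then prove that $\fT_X([v]_a) = X$. Since the given measure is in particular $\fT_X$-nonsingular (this is the only property we use), it would follow that $\mu(X) = \mu(\fT_X([v]_a)) = 0$, contradicting that $\mu$ is a probability measure.

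The heart of the matter is therefore to show that every $x \in X$ lies in $\fT_X([v]_a)$, i.e., that $x$ can be altered on a finite window so as to display $v$ at position $a$ while staying in $X$. Let $r$ be the constant from the definition of strong irreducibility. For each large $M$, I would glue, using strong irreducibility twice, the three words $x_{[-M,\,a-r-1]}$, $v$, and $x_{[a+\ell+r,\,M]}$: first bridge $x_{[-M,\,a-r-1]}$ to $v$ by a word $w_1^{(M)} \in \cB_r(X)$, obtaining $x_{[-M,\,a-r-1]}\, w_1^{(M)}\, v \in \cB(X)$, then bridge this to $x_{[a+\ell+r,\,M]}$ by a word $w_2^{(M)} \in \cB_r(X)$. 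The resulting word $\gamma_M \in \cB(X)$ occupies positions $[-M,M]$, agrees with $x$ on $[-M,\,a-r-1]\cup[a+\ell+r,\,M]$, equals $v$ on $[a,\,a+\ell-1]$, and carries the length-$r$ bridges on the two gaps. Here I rely on the standard fact that $z \in X$ if and only if every finite subword of $z$ lies in $\cB(X)$, which holds because $X$ is closed and shift-invariant.

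To pass from these finite words to an actual point of $X$, I would select for each $M$ a point $z^{(M)} \in X$ with $z^{(M)}_{[-M,M]} = \gamma_M$ (possible since $\gamma_M \in \cB(X)$), and then extract a subsequence along which the $z^{(M)}$ converge in $X$ and along which the finitely many possible bridges $w_1^{(M)}, w_2^{(M)} \in \cB_r(X)$ are constant, say $w_1, w_2$. Since $X$ is closed, the limit $y$ lies in $X$; it agrees with $x$ off the bounded window $[a-r,\,a+\ell+r-1]$, because each coordinate outside that window is eventually equal to $x$ along the subsequence, while $y_{[a,\,a+\ell-1]} = v$. Thus $(x,y) \in \fT_X$ with $y \in [v]_a$, so $x \in \fT_X([v]_a)$, and since $x$ was arbitrary, $\fT_X([v]_a) = X$. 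The only delicate point is this final limiting step: strong irreducibility directly supplies gluings of finite words, and a compactness argument is needed to glue to the two infinite tails of $x$ while confining the modification to a bounded window. Everything else is bookkeeping.
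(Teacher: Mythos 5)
Your proof is correct, and its finishing move differs from the paper's in a way worth noting. Both arguments rest on the same combinatorial core, which you establish carefully: strong irreducibility plus a compactness/subsequence argument shows that any $x \in X$ can be altered on a bounded window to display any prescribed word of $\cB(X)$ at any prescribed position --- equivalently, every $\fT_X$-equivalence class is dense, so the saturation $\fT_X([v]_a)$ of every nonempty cylinder is all of $X$. (The paper compresses this step into one sentence, ``by compactness, it follows that\dots''; your two-bridge gluing with gaps of length $r$ and extraction of a convergent subsequence is exactly the argument behind that sentence.) Where you diverge is in converting density of classes into full support. You argue by contradiction using only the defining property of nonsingularity: if $\mu([v]_a) = 0$, then the saturation $\fT_X([v]_a) = X$ would be $\mu$-null, contradicting $\mu(X) = 1$. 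The paper instead argues directly: it partitions $X$ into countably many sets $E_{u,v}$ (recording which replacement word $u$ can be inserted and which word $v$ currently occupies the window), finds one of positive measure by pigeonhole, applies the explicit holonomy $\psi_{u,v,a,b}$ from \S 2.2, and uses strict positivity of the Radon--Nikodym cocycle to get
\[
\mu([w]_a) \;\geq\; \int_{E_{u,v}} D_{\mu, \fT_X}\bigl(x, \psi_{u,v,a,b}(x)\bigr)\, d\mu(x) \;>\; 0.
\]
Your route is more elementary --- it needs no explicit holonomies, no pigeonhole, and no Radon--Nikodym cocycle, only the fact that null sets have null saturations --- while the paper's direct computation produces an explicit positive lower bound for $\mu([w]_a)$ and stays within the holonomy-and-cocycle formalism that the rest of the paper reuses. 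For the proposition as stated, either mechanism suffices.
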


\begin{proof}
Let $\mu$ be a $\fT_X$-invariant nonsingular measure on $X$. Let $r \geq 1$ be a witness for the strong irreducibility of $X$. Fix $a, b \in \Z$ with $a < b$, and fix $w \in \cB_{b-a}(X)$. By strong irreducibility, for each $n \geq 1$ and each $p, s \in \cB_{n}(X)$, there exists $w' \in \cB_{(b-a)+2r} (X)$ such that $w'_{[a,b]} = w$ and $p w' s \in \cB(X)$.
By compactness, it follows that for every $x \in X$, there exists some $u \in \cB(X)$ with $u_{[a,b]} = w$ and $x_{(-\infty, a-r]} u x_{[b+r, \infty)} \in X$.

For each pair $u,v \in \cB_{(b-a)+2r-1}(X)$ with $u_{[a,b]} = w$, let 
\[
E_{u,v} = [v]_{a-r} \cap \{ x \in X \, : x_{(-\infty, a-r]} u x_{[b+r, \infty)} \in X  \}
\]
Then $\bigcup_{u,v} E_{u, v} = X$. In particular, $\mu(E_{u,v}) > 0$ for at least one pair $u,v$. Let $\psi_{u,v,a,b}$ be the holonomy of $\fT_X$ that exchanges $u$ and $v$ on $[a,b]$ when possible and does nothing else, as in \S \ref{gibbs-subsec}. By the definition of $E_{u,v}$, we have that for every $x \in E_{u,v}$, $\psi_{u,v,a,b}(x) \in [u]_{a-r}$. Then we have
\begin{align*}
    \mu([w]_a) &\geq \mu([u]_{a-r}) \\
    &\geq \mu(\psi_{u,v,a,b}(E_{u,v})) \\
    &= \int_{E_{u,v}} D_{\mu, \fT_X}(x, \psi_{u,v,a,b}(x)) \, d\mu(x)  \\
    &> 0
\end{align*}
Since $w$ was an arbitrary word at an arbitrary position, $\mu$ has full support.
\end{proof}

\begin{remark}
The statement and proof of Proposition \ref{strong-irred-support} generalize essentially without modification when $\Z$ is replaced by an arbitrary countable group, with a suitable generalization of strong irreducibility \cite{ceccherini-silberstein-coornaert-2012-periodic, barbieri2018equivalence}. We also mention that strong irreducibility is used in the proof of Proposition \ref{strong-irred-support} to show that all $\fT_X$ equivalence classes are dense in $X$. These equivalence classes are the orbits of the action of the countable group $\Gamma$ generating $\fT_X$. In the special case of a shift of finite type, $\Gamma$ can be taken to be generated by homeomorphisms \cite{meyerovitch2013gibbs}. We can thus interpret Proposition \ref{strong-irred-support} as the statement that a nonsingular measure for a minimal continuous action must have full support.
\end{remark}

\begin{defn}\label{defn-doub-trans} [doubly transitive point]
Let $X$ be a shift space. A point $x \in X$ is \textit{doubly transitive} if every word $w \in \cB(X)$ appears in $x$ infinitely often to the left and to the right.
\end{defn}

It is easy to check that a shift space $X$ contains a doubly transitive point if and only if $X$ is irreducible (see \S 9.1, \cite{lind1995introduction} for more details). The following lemma will also be useful.

\begin{lemma}\label{lemma-doub-trans-invar}
Let $X$ be a shift space. Then the set $D_X \subset X$ of doubly transitive points is $\fT_X$-invariant.
\end{lemma}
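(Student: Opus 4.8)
The plan is to unwind the definition of $\fT_X$-invariance: since $\fT_X$ is an equivalence relation, it suffices to show that whenever $x \in D_X$ and $(x,y) \in \fT_X$, we also have $y \in D_X$. By the definition of the Gibbs relation, $(x,y) \in \fT_X$ means there is some $N \geq 1$ with $x_k = y_k$ for every $k$ with $|k| > N$; that is, $x$ and $y$ differ only within the finite central window $[-N, N]$.

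The guiding idea is that double transitivity is a statement about the behaviour of a point at $\pm\infty$, and is therefore insensitive to any modification confined to a finite central window. To make this precise, I would fix an arbitrary word $w \in \cB(X)$ of length $\ell$. Since $x$ is doubly transitive, there are infinitely many indices $i > 0$ with $x_{[i, i+\ell-1]} = w$; all but finitely many of these satisfy $i > N$, so that the entire occurrence window $[i, i+\ell-1]$ lies in the region where $x$ and $y$ agree. For each such $i$ we then have $y_{[i, i+\ell-1]} = x_{[i, i+\ell-1]} = w$, so $w$ occurs infinitely often to the right in $y$ as well. The symmetric argument using indices $i < 0$ with $i + \ell - 1 < -N$ handles occurrences to the left. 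As $w \in \cB(X)$ was arbitrary, $y$ is doubly transitive, i.e., $y \in D_X$.

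I do not anticipate any genuine obstacle here; the single point requiring a moment's care is verifying that the chosen occurrences of $w$ fall entirely outside $[-N, N]$, which is automatic because $w$ has a fixed finite length while $x$ contains infinitely many occurrences of $w$ on each side, so discarding the finitely many that meet $[-N,N]$ still leaves infinitely many. The argument is manifestly symmetric in $x$ and $y$, so it simultaneously shows $y \in D_X \Rightarrow x \in D_X$, confirming that $D_X$ is a union of $\fT_X$-equivalence classes and hence $\fT_X$-invariant.
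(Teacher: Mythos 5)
Your proof is correct and follows essentially the same route as the paper's: both arguments observe that an occurrence of a word $w$ lying entirely outside the finite window where $x$ and $y$ differ is an occurrence in both points, and that all but finitely many occurrences on each side do so. The only cosmetic difference is that the paper separately notes the vacuous case $D_X = \emptyset$ (when $X$ is not irreducible), which your formulation handles automatically since the defining implication is then vacuously true.
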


\begin{proof}
If $X$ is not irreducible then $D_X = \emptyset$, which is trivially $\fT_X$-invariant, so assume that $X$ is irreducible. Let $x \in D_X$ and suppose that $x' \in X$ with $(x,x') \in \fT_X$. Then there exists some $\Delta = [a,b] \cap \Z$ such that $x_{\Delta^c} = x'_{\Delta^c}$. Since $x \in D_X$, every word $w \in \cB(X)$ appears infinitely often both in $x_{(-\infty, a-1]} = x'_{(-\infty, a-1]}$ and in $x_{[b+1, \infty)} = x'_{[b+1, \infty)}$. Therefore $x' \in D_X$.
\end{proof}

It is then immediate that $X \setminus D_X$ is also $\fT_X$-invariant. (In general, if $\mathcal{R}$ is an equivalence relation on $X$ and $A$ is an $\mathcal{R}$-invariant subset, then $A^c$ is also $\mathcal{R}$-invariant.)

The following is a generalization of Theorem 9.4.9 in \cite{lind1995introduction}, which appears to be well-known (\cite{yoo2018decomposition}, proof of Lemma 4.5). We thank Tom Meyerovitch for this short proof.

\begin{proposition}\label{prop-doub-trans-full-meas}
Let $X$ be an irreducible shift space and let $\mu$ be a fully supported $\sigma$-invariant ergodic  measure on $X$. Let $D_X$ denote the set of doubly transitive points. Then $\mu(D_X) = 1$.
\end{proposition}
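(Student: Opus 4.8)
The plan is to reduce the statement to a countable intersection of full-measure events, one for each word, and to establish each such event using the Birkhoff ergodic theorem together with the full-support hypothesis. Since $\cB(X)$ is countable (each $\cB_n(X)$ is finite), and $D_X = \bigcap_{w \in \cB(X)} B_w$, where $B_w$ denotes the set of points in which $w$ appears infinitely often both to the left and to the right, it suffices to show $\mu(B_w) = 1$ for each individual $w \in \cB(X)$; the conclusion then follows from countable subadditivity applied to the complements $B_w^c$.

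To handle a single word $w$, I would first observe that $w$ appears in $x$ at position $j$ precisely when $\sigma^j x \in [w]_0$, so that $w$ appears infinitely often to the right in $x$ iff $\sigma^j x \in [w]_0$ for infinitely many $j \geq 0$, and infinitely often to the left iff $\sigma^{-j} x \in [w]_0$ for infinitely many $j \geq 0$. Applying the Birkhoff ergodic theorem to the indicator $\ind_{[w]_0}$ under $\sigma$ (respectively under $\sigma^{-1}$, which is also ergodic for $\mu$), for $\mu$-a.e. $x$ the averages $\frac{1}{N}\sum_{j=0}^{N-1} \ind_{[w]_0}(\sigma^{\pm j} x)$ converge to $\mu([w]_0)$. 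This is where full support enters: $\mu([w]_0) > 0$, so the limiting frequency is strictly positive, which forces the sum $\sum_{j} \ind_{[w]_0}(\sigma^{\pm j} x)$ to diverge, i.e. $w$ occurs infinitely often in the respective direction. Intersecting the two full-measure events (left and right) yields $\mu(B_w) = 1$.

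I do not anticipate a serious obstacle; the argument is a clean application of the pointwise ergodic theorem, and the only points requiring care are bookkeeping ones---verifying that positivity of the Birkhoff average genuinely produces infinitely many (not merely arbitrarily large) visits, and confirming that $\sigma$-ergodicity transfers to $\sigma^{-1}$ so that the ``left'' direction is covered symmetrically. An alternative would be to invoke Poincar\'e recurrence on each $[w]_0$ to obtain a.e. recurrence within $[w]_0$, but that route only directly controls points already in $[w]_0$ and would require an extra ergodicity argument to propagate the conclusion to $\mu$-a.e. $x \in X$; the Birkhoff approach avoids this and treats all starting points uniformly.
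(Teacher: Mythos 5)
Your proof is correct and is essentially the paper's own argument: the paper invokes the full-measure set of points generic for both $\sigma$ and $\sigma^{-1}$ (plus full support) and shows such points are doubly transitive, which when unpacked is exactly your word-by-word application of the Birkhoff ergodic theorem to the indicators $\ind_{[w]_0}$ in both time directions. Your version merely does the countable intersection over $w \in \cB(X)$ explicitly instead of citing genericity as a black box.
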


\begin{proof}
Recall that a \textit{generic point}, for a continuous transformation of a compact space with an $\sigma$-invariant measure, is a point that satisfies the conclusion of the pointwise ergodic theorem (Theorem $1.14$, \cite{walters1982ergodic}) for every continuous function. Since $\mu$ is ergodic with respect to $\sigma$, the sets of generic points with respect to $\sigma$ and $\sigma^{-1}$ have full measure (\cite{furstenberg-1981-recurrence}, Proposition 3.7), so their intersection has full measure as well. Since $\mu$ has full support, every point that is generic for both $\sigma$ and $\sigma^{-1}$ (in particular, almost every point in $X$) is doubly transitive. In greater detail: we prove the contrapositive. Suppose $x$ is not doubly transitive. Then there is a word $w$ which appears at most finitely often, without loss of generality to the right, in $x$. Then
\[
\lim_{n \to \infty} \frac{1}{n} \sum_{k=0}^{n-1} \mathbf{1}_{[w]_0} (\sigma^k x) = 0 \neq \mu([w]_0)
\]
so $x$ does not satisfy the conclusion of the ergodic theorem for the continuous function $\mathbf{1}_{[w]_0}$. Therefore $x$ is not generic for $\sigma$.
\end{proof}

Let $X$ be a shift of finite type, let $Y$ be a sofic shift, and let $\pi: X \to Y$ be a finite-to-one factor code. Then there is an integer $d \geq 1$, known as the \textit{degree} of $\pi$, such that each doubly transitive point $y \in Y$ has exactly $d$ $\pi$-preimages \cite{lind1995introduction}. An important special case is when the degree is one, which is in fact equivalent to the following condition, which a priori might seem more general.

\begin{defn}\label{almost-invertibility}[almost invertibility]
Let $X$ be an irreducible shift of finite type, let $Y$ be an irreducible sofic shift, and let $\pi: X \to Y$ be a factor code. We say that $\pi$ is \textit{almost invertible} if every doubly transitive point $y \in D_Y$ has a unique preimage.
\end{defn}

\begin{remark} A factor code on an irreducible shift of finite type is almost invertible if and only if it is finite-to-one with degree one (\cite{lind1995introduction}, Proposition 9.2.2).
\end{remark}

The following is our first preservation of Gibbsianness result.

\begin{proposition}[preservation of Gibbsianness for almost invertible factor codes]\label{prop-pres-gibbs}
Let $X$ be an irreducible shift of finite type, let $Y$ be a sofic shift, and let $\pi: X \to Y$ be an almost invertible factor code. Let $\mu$ be a measure on $X$ which is fully supported, $\sigma$-invariant, ergodic, and $\fT_X$-nonsingular. Let $\nu = \pi_* \mu$. Then $\nu$ is $\fT_Y$-nonsingular. Moreover, if $\mu$ is Gibbs with respect to an additive cocycle $\phi$, then $\nu$ is Gibbs with respect to $\phi \circ (\pi^{-1} \times \pi^{-1})$, where $\pi^{-1}$ is well-defined $\nu$-almost everywhere.
\end{proposition}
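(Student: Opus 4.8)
The plan is to realize $\pi$ as a measure-theoretic isomorphism between $(X,\mu)$ and $(Y,\nu)$ that conjugates the two Gibbs relations on sets of full measure, and then transport the nonsingularity and the Radon--Nikodym cocycle of $\mu$ across this isomorphism. First I would record the soft properties of $\nu = \pi_*\mu$. Since $\pi$ is a continuous, surjective, $\sigma$-equivariant factor code, $\nu$ is $\sigma$-invariant and ergodic, and it has full support because $\mu$ does and $\pi$ is continuous and surjective; moreover $Y$, as a factor of the irreducible $X$, is itself irreducible. Applying Proposition \ref{prop-doub-trans-full-meas} to both systems then gives $\mu(D_X) = 1$ and $\nu(D_Y)=1$. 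Because $\pi$ is almost invertible, i.e. finite-to-one of degree one, every point of $D_Y$ has a unique preimage; since a sliding block code sends doubly transitive points to doubly transitive points, $\pi(D_X)\subseteq D_Y$, and hence $\pi$ is injective on $D_X$. I would set $D_Y^{*} := \pi(D_X)$, which is Borel with Borel inverse by the Lusin--Souslin theorem and satisfies $\nu(D_Y^{*})=1$, and regard $\pi^{-1}$ as the resulting Borel section, defined $\nu$-a.e. Thus $\pi$ restricts to a measure-preserving Borel isomorphism $T:(D_X,\mu)\to(D_Y^{*},\nu)$.

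The heart of the argument, and the step I expect to be the main obstacle, is to show that $T$ conjugates the Gibbs relations: for $x,x'\in D_X$ one has $(x,x')\in\fT_X$ if and only if $(\pi x,\pi x')\in\fT_Y$. The forward implication is immediate, since a sliding block code preserves eventual agreement of configurations. For the converse I would first recode so that $\pi$ is a $1$-block code on a $1$-step SFT, and invoke the characterization of finite-to-one codes by the absence of diamonds, equivalently: if $\pi z = \pi z'$ and $(z,z')\in\fT_X$, then $z=z'$. The difficulty is that $\pi x$ and $\pi x'$ are only $\fT_Y$-related, agreeing off a finite window $\Delta$, rather than equal, so this characterization does not apply directly and the finite window must be bridged. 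Here I would use double transitivity of $x$ and $x'$ together with a magic word for the finite-to-one code $\pi$ (a standard feature of such codes, cf. \cite{lind1995introduction}): since $\pi x = \pi x'$ on each of the two tails determined by $\Delta$, and a magic word recurs arbitrarily far out in both directions, the synchronizing effect of the magic word determines the two preimages to agree on a left tail and on a right tail, whence $(x,x')\in\fT_X$.

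Granting this conjugacy, the remaining steps are routine transport arguments. Given any holonomy $\psi:A\to B$ of $\fT_Y$, we may assume $A,B\subseteq D_Y^{*}$ since $\nu(D_Y^{*})=1$, and the conjugacy shows that $\tilde\psi := T^{-1}\psi T$ is a holonomy of $\fT_X$. Nonsingularity of $\nu$ then follows from that of $\mu$: if $E\subseteq A$ has $\nu(E)=0$, then $\mu(T^{-1}E)=0$, so $\mu(\tilde\psi(T^{-1}E))=0$ by $\fT_X$-nonsingularity of $\mu$, and $\tilde\psi(T^{-1}E)=T^{-1}\psi(E)$ gives $\nu(\psi(E))=0$. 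For the cocycle, since $T$ is measure-preserving the Radon--Nikodym derivatives satisfy $\frac{d(\nu\circ\psi)}{d\nu}\circ T = \frac{d(\mu\circ\tilde\psi)}{d\mu}$ $\mu$-a.e., and the right-hand side equals $\exp(\phi(x,\tilde\psi(x)))$ because $\mu$ is Gibbs with respect to $\phi$. Unwinding $\tilde\psi = T^{-1}\psi T$ and writing $y=Tx$ yields
\[
D_{\nu,\fT_Y}(y,\psi(y)) = \exp\bigl(\phi(\pi^{-1}y,\pi^{-1}\psi(y))\bigr) = \exp\bigl((\phi\circ(\pi^{-1}\times\pi^{-1}))(y,\psi(y))\bigr)
\]
for $\nu$-a.e.\ $y$. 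Since $\phi\circ(\pi^{-1}\times\pi^{-1})$ inherits the cocycle property from $\phi$ through the conjugacy, this is exactly the statement that $\nu$ is Gibbs with respect to $\phi\circ(\pi^{-1}\times\pi^{-1})$.
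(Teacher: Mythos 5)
Your overall strategy is the same as the paper's: use degree one plus the Lusin--Souslin theorem to make $\pi$ a Borel isomorphism on doubly transitive points, use recurrence of a magic symbol/word in both tails to show that $\fT_Y$-related doubly transitive points have $\fT_X$-related preimages (this is exactly the paper's use of Proposition 9.1.9 of \cite{lind1995introduction}), and then transport holonomies and Radon--Nikodym cocycles across this isomorphism. The relation-conjugacy step, which you correctly identify as the heart of the matter, is argued soundly.

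There is, however, a genuine gap in your nonsingularity step: the reduction ``given any holonomy $\psi\colon A \to B$ of $\fT_Y$, we may assume $A,B\subseteq D_Y^{*}$ since $\nu(D_Y^{*})=1$'' is circular. Nonsingularity is precisely the statement that holonomies carry $\nu$-null sets to $\nu$-null sets; until it is established, a holonomy may a priori carry the null set $A \setminus D_Y^{*}$ onto a set of positive measure, so one cannot simply discard it. What licenses such a reduction is $\fT_Y$-invariance of the full-measure set being restricted to: if $D_Y^{*}$ is $\fT_Y$-invariant, then $\psi(E\setminus D_Y^{*}) \subseteq Y \setminus D_Y^{*}$, which is null, and the restriction is harmless. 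But you only defined $D_Y^{*} = \pi(D_X)$ and never addressed its invariance, and invariance of $\pi(D_X)$ is not obvious without a further input. The clean repair is to identify $D_Y^{*}$ with $D_Y$: for finite-to-one codes on irreducible shifts of finite type, preimages of doubly transitive points are doubly transitive (\cite{lind1995introduction}, \S 9.1), so $\pi(D_X) = D_Y$; then Lemma \ref{lemma-doub-trans-invar} gives the needed invariance. The paper sidesteps the issue differently: rather than shrinking the domain of the holonomy, it decomposes the image, writing $\nu(\gamma(A)) = \nu(\gamma(A)\setminus D_Y) + \nu(\gamma(A) \cap D_Y)$, where the first term vanishes merely because $\nu(D_Y)=1$, and the second is transported using invariance of $D_Y$ and uniqueness of preimages. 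Your cocycle computation at the end has the same latent dependence on this point (you need $\psi(y) \in D_Y^{*}$ for $\nu$-a.e.\ $y \in A$), and is fixed by the same repair.
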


\begin{proof}
First, for a technical reason described below, we assume that $\pi$ is a one-block code, in the sense that it is induced by a map $\Pi\colon \cB_1(X) \to \cB_1(Y)$. We assume further that it has a \textit{magic symbol}---that is, a symbol $b \in \cB_1(Y)$ with a unique $\Pi$-preimage $a \in \cB_1(X)$, such that if $\pi(x)=y$ and $y_0 = b$, then $x_0 = a$. These assumptions incur no loss of generality (\cite{lind1995introduction}, \S9.1).

These assumptions made, we see how to lift holonomies of $\fT_Y$ to holonomies of $\fT_X$. Since $\pi$ is a Borel map between complete separable metric spaces $X,Y$ which restricts to an injection on the Borel set $D_X$, the inverse $\pi|_{D_X}^{-1}: D_Y \to D_X$  is Borel (\cite{kechris1995descriptive}, Theorem 15.1). Now let $\psi: A \to B$ be a holonomy of $\fT_Y$. Define the map $\Tilde{\psi} : \pi^{-1}(A) \cap D_X \to \pi^{-1}(B) \cap D_X$ by $\Tilde{\psi} = \pi^{-1} \circ \psi \circ \pi$; this is clearly a measurable bijection. 

Moreover, to see that $(x,\Tilde{\psi}(x)) \in \fT_X$  for each $x \in \pi^{-1}(A) \cap D_X$, observe that $(\pi(x), \pi \circ \Tilde{\psi}(x)) = (\pi(x), \psi \circ \pi(x)) \in \fT_Y$, so there exist $m < n$ with $\pi(x)_{[m,n]^c} = \psi \circ \pi(x)_{[m,n]^c}$. Since $\pi(x)$ and $\psi \circ \pi (x)$ are doubly transitive, the magic symbol $b$ for $\pi$ appears infinitely often to the left and right. We may therefore assume, by taking $m$ and $n$ larger if necessary, that
\[
\pi(x)_{m-1} = \psi \circ \pi(x)_{m-1} = b = \pi(x)_{n+1} = \psi \circ \pi(x)_{n+1}
\]
Moreover, since the magic symbol occurs infinitely often to the left and right, any word outside $[m,n]$ appears within a word beginning and ending with $b$. Proposition 9.1.9 in \cite{lind1995introduction} asserts, in the almost invertible case, that such a word has a unique $\pi$-preimage. This shows that $(x,\Tilde{\psi}(x))$ agree outside of $[m,n]$, so $\Tilde{\psi}$ is indeed a holonomy of $\fT_X$.

Now, let $\Gamma_X, \Gamma_Y$ be countable groups of holonomies generating $\fT_X, \fT_Y$ respectively, and let $A \subset Y$ be Borel with $\nu(A) = 0$. Observe that $\fT_Y(A) = \bigcup_{\gamma \in \Gamma_Y} \gamma(A)$. Observe further that, for each $\gamma \in \Gamma_Y$,
\begin{align*}
    \nu(\gamma(A)) &= \nu( \gamma(A) \setminus D_Y) + \nu( \gamma(A) \cap D_Y   ) \\
    &= \mu\left(\pi^{-1}(\gamma(A) \cap D_Y)\right)\\
    &= \mu( \Tilde{\gamma} (\pi^{-1}(A \cap D_Y))          ) \\
    &= 0
\end{align*}
since $\pi^{-1}(A \cap D_Y)$ is $\mu$-null and $\mu$ is $\fT_X$-nonsingular. Therefore $\nu$ is indeed $\fT_Y$-nonsingular.

Again, let $\psi: A \to B$ be a holonomy of $\fT_Y$ and let $\Tilde{\psi}$ be as above. Then
\begin{align*}
    \nu(B) &= \mu( \pi^{-1}(\psi(A \cap D_Y))     )     \\
    &= \mu(  \Tilde{\psi}(\pi^{-1} (A) \cap D_X )     ) \\
    &= \int_{\pi^{-1} (A) \cap D_X} D_{\mu, \fT_X}( x, \Tilde{\psi}(x)   ) \, d\mu(x) \\
    &= \int_{A \cap D_Y} D_{\mu, \fT_X}( \pi^{-1}(y), \pi^{-1}(\psi(y))  ) \, d\nu(y),
\end{align*}
where the last equality follows from the change of variables formula.

On the other hand, since $\nu$ is $\fT_Y$-nonsingular, we know that
\[
\nu(B) = \int_A D_{\mu, \fT_Y}( y, \psi(y)  ) \, d\nu(y)
\]
By the uniqueness of the Radon-Nikodym derivative, we then have, for almost all $(y,y') \in D_Y^2 \cap \fT_Y$, that $D_{\nu, \fT_Y}(y,y') = D_{\mu, \fT_X}( \pi^{-1}(y), \pi^{-1}(y')  )$. 
\end{proof}

\begin{remark}
In Corollary \ref{irred-fin-1}, we generalize Proposition \ref{prop-pres-gibbs} from almost invertible to finite-to-one codes, in the case that the cocycle on the range is induced by a sufficiently regular potential. It is therefore natural to ask whether the proof of Proposition \ref{prop-pres-gibbs} can be adapted to the finite-to-one setting. However, such an adaptation would not be straightforward, because at higher degrees we lose a key condition on which the proof of Proposition \ref{prop-pres-gibbs} relied: namely, that preimages of asymptotic doubly transitive points must themselves be asymptotic. Indeed, if $\pi$ has degree $d > 1$, then every doubly transitive point $y \in Y$ has $d$ preimages, no two of which ever exhibit the same symbol in the same position (see Exercise 9.1.3, \cite{lind1995introduction}), and are therefore about as far from asymptotic as one could imagine.
\end{remark}

\begin{remark}
The hypotheses of Proposition \ref{prop-pres-gibbs} are likely more restrictive than would be needed simply to show that the pushforward of a $\fT_X$-nonsingular measure is $\fT_Y$-nonsingular. The reason is that, for the application in Theorem \ref{sofic-lf}, we need pointwise control over the potential inducing the Radon-Nikodym cocycle of the pushforward measure, whereas for the purposes of \cite{pollicott-kempton-2011-factors}, for instance, it is sufficient to determine the potential's regularity. For instance, consider a very simple symbol amalgamation code from the full $3$-shift $X= \{ 0,1,2  \}^{\Z}$ to the full $2$-shift $Y=\{ 0,1 \}^{\Z}$, given by amalgamating the symbols $1,2$ into the symbol $1$. This code takes the uniform Bernoulli measure on $X$, which is Gibbs for the zero cocycle, to the $(1/3,2/3)$ Bernoulli measure on $Y$, which is Gibbs with respect to a cocycle obtained from a locally constant potential. This would count as preservation of Gibbsianness in the sense of \cite{pollicott-kempton-2011-factors} but not in ours.
\end{remark}

We now proceed toward Proposition \ref{sofic-gibbs-pushfwd}, which is a converse result to Proposition \ref{prop-pres-gibbs}, showing that every nonsingular measure on an irreducible sofic shift is the pushforward, through an almost invertible code, of a nonsingular measure on an irreducible shift of finite type. We begin with the following lemma, which is well-known, and can be regarded as a relative version of the Krylov-Bogliubov theorem. One standard proof uses the Hahn-Banach theorem. For completeness, and possibly independent interest, we include a different proof, based on the standard proof of the (non-relative) Krylov-Bogliubov theorem \cite{walters1982ergodic}. The Hahn-Banach argument requires no dynamical assumptions at all, whereas our argument relies on the dynamical setting to make the argument somewhat more constructive. We state it only for ergodic measures on shift spaces; the argument goes through for any continuous transformation of a compact metric space, and it easily generalizes to any $\sigma$-invariant measure by convexity.

\begin{lemma}\label{rel-kry-bog}
Let $X$ and $Y$ be shift spaces, let $\pi: X \to Y$ be a factor code, and let $\nu$ be a $\sigma$-invariant measure on $Y$. Then there exists a $\sigma$-invariant measure $\mu$ on $X$ with $\pi_* \mu = \nu$. If $\nu$ is ergodic then $\mu$ can be chosen to be ergodic as well.
\end{lemma}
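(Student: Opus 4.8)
The plan is to mimic the standard proof of the Krylov--Bogolyubov theorem, but starting from an arbitrary preimage measure rather than an arbitrary point mass, and then averaging under the dynamics to force shift-invariance while preserving the constraint $\pi_* \mu = \nu$. First I would choose \emph{any} Borel probability measure $\lambda$ on $X$ with $\pi_* \lambda = \nu$; such a $\lambda$ exists because $\pi$ is a continuous surjection of compact metric spaces, so one may select a measurable right inverse (a Borel section) $s\colon Y \to X$ of $\pi$ and set $\lambda = s_* \nu$, which satisfies $\pi_* \lambda = \pi_* s_* \nu = \nu$ since $\pi \circ s = \mathrm{id}_Y$. The point of insisting on the constraint at this first stage is that it will be preserved by everything that follows.

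Next I would form the Cesàro averages $\mu_N = \frac{1}{N}\sum_{k=0}^{N-1} \sigma^k_* \lambda$ and pass to a weak-$*$ limit $\mu$ along a subsequence $N_j \to \infty$, which exists by weak-$*$ compactness of the space of Borel probability measures on the compact space $X$. The usual Krylov--Bogolyubov argument shows $\mu$ is $\sigma$-invariant, because for any $g \in C(X)$ one has $\left|\int g \circ \sigma \, d\mu_N - \int g \, d\mu_N\right| = \frac{1}{N}\left|\int (g \circ \sigma^N - g)\, d\lambda\right| \leq \frac{2\|g\|_\infty}{N} \to 0$. The key observation is that the pushforward constraint survives the whole construction: since $\pi$ is shift-equivariant, $\sigma^k_* \lambda$ pushes forward to $\sigma^k_* \nu = \nu$ (using $\sigma$-invariance of $\nu$), so $\pi_* \mu_N = \nu$ for every $N$; and because $\pi$ is continuous, $f \mapsto \pi_* f$ is weak-$*$ continuous, so $\pi_* \mu = \lim_j \pi_* \mu_{N_j} = \nu$. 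This establishes the first assertion.

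For the ergodic refinement, I would take the $\sigma$-invariant measure $\mu$ just produced and consider its ergodic decomposition $\mu = \int \mu_\omega \, d\mathbb{P}(\omega)$ into ergodic $\sigma$-invariant components. Pushing forward through $\pi$ and using weak-$*$ continuity, $\nu = \pi_* \mu = \int \pi_* \mu_\omega \, d\mathbb{P}(\omega)$ exhibits $\nu$ as an average of the $\sigma$-invariant measures $\pi_* \mu_\omega$. Since $\nu$ is assumed ergodic, it is an extreme point of the simplex of $\sigma$-invariant measures on $Y$, so $\pi_* \mu_\omega = \nu$ for $\mathbb{P}$-almost every $\omega$; any such component $\mu_\omega$ is ergodic and satisfies $\pi_* \mu_\omega = \nu$, which is the desired measure.

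The main obstacle I anticipate is the ergodic step rather than the averaging step. The averaging argument is entirely routine once the pushforward constraint is seen to be preserved. The delicate point is justifying that an extreme point of the target simplex must be the pushforward of some ergodic component, which relies on the fact that $\pi_*$ maps the invariant-measure simplex affinely and continuously into the target simplex, so that an extreme point in the image can only arise from components whose images already equal it almost surely. One must be slightly careful to invoke the ergodic decomposition (valid here because $X$ is compact metrizable and $\sigma$ is continuous) and to use the extremality characterization of ergodicity; these are standard, but they are where the real content of the ergodic claim resides.
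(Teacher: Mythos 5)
Your proof is correct, and while it shares the Ces\`aro-averaging core with the paper's argument, it differs in two substantive ways. For the initial lift, the paper starts from a point mass: it takes a generic point $y$ for $\nu$ (which exists because $\nu$ is ergodic), picks any preimage $x \in \pi^{-1}(y)$, and averages the empirical measures of $x$, so that the constraint $\pi_* \mu_n = \nu_n \to \nu$ holds only asymptotically, via the pointwise ergodic theorem. You instead manufacture an exact lift $\lambda = s_* \nu$ from a Borel section $s$ of $\pi$ (Federer--Morse, or Kuratowski--Ryll-Nardzewski applied to the compact-valued map $y \mapsto \pi^{-1}(y)$), so that $\pi_* \mu_N = \nu$ holds exactly at every stage and no genericity is needed. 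This buys you something the paper only gets indirectly: your first part handles arbitrary invariant $\nu$ in one stroke, whereas the paper proves the ergodic case and disposes of the general case ``by convexity''; the price is a descriptive-set-theoretic selection theorem, a heavier (though entirely standard) tool than the ergodic theorem, and closer in spirit to the Hahn--Banach route the paper mentions but deliberately avoids. For the ergodic refinement, the paper cites a standard convexity argument (Krein--Milman applied to the compact convex set of invariant lifts of $\nu$, whose extreme points are ergodic), while you use the ergodic decomposition of $\mu$ together with extremality of $\nu$; these are equivalent in substance, but your justification as written (``$\pi_*$ is affine and continuous, so an extreme point in the image can only arise from components whose images equal it'') is not by itself a proof. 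The clean way to close it: each $\pi_* \mu_\omega$ is itself ergodic (preimages of invariant sets are invariant), so $\nu = \int \pi_* \mu_\omega \, d\mathbb{P}(\omega)$ represents the ergodic measure $\nu$ as a barycenter of ergodic measures, and uniqueness of the ergodic decomposition (equivalently, Bauer's theorem that an extreme point of a compact convex metrizable set has $\delta$ as its unique representing measure) forces $\pi_* \mu_\omega = \nu$ for $\mathbb{P}$-almost every $\omega$.
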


\begin{proof}
Let $y \in Y$ be a generic point with a preimage $x \in X$, and let $\nu_n = \frac{1}{n} \sum_{k=0}^{n-1} \delta_y \circ \sigma^{-k}$ be the $n^{\text{th}}$ empirical measure for $y$. By the ergodic theorem, $\nu_n$ converges to $\nu$ in the weak-$*$ topology. Similarly, let $\mu_n = \frac{1}{n} \sum_{k=0}^{n-1} \delta_x \circ \sigma^{-k}$, and appeal to compactness to extract a weak-$*$ convergent subsequence with limit $\mu$. It is not hard to see that $\mu$ must be $\sigma$-invariant (since $\| \mu_n - \mu_n \circ \sigma   \|_{\mathrm{TV}} \leq 2/n$). Moreover, since $\nu_n = \pi_* \mu_n$ and the pushforward operation is continuous, we can conclude that $\pi_* \mu = \nu$. The fact that $\mu$ can be chosen to be ergodic follows by a standard convexity argument (see \cite{simon2011convexity}, Chapter 8).
\end{proof}

\begin{proposition}[lifting Gibbs measures through almost invertible factor codes]\label{sofic-gibbs-pushfwd}
Let $X$ be an irreducible shift of finite type, let $Y$ be a sofic shift, and let $\pi: X \to Y$ be an almost invertible factor code. Let $\nu$ be a measure on $Y$ which is fully supported, $\sigma$-invariant, ergodic, and $\fT_Y$-nonsingular. If $\nu$ is Gibbs for an additive cocycle $\phi$, then every $\sigma$-invariant measure $\mu$ on $X$ with $\pi_* \mu = \nu$ is  $\fT_X$-nonsingular and, in particular, is a $\sigma$-invariant Gibbs measure for $\phi \circ (\pi \times \pi)$.
\end{proposition}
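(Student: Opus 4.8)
The plan is to run the change-of-variables argument of Proposition \ref{prop-pres-gibbs} in reverse, exploiting that an almost invertible code is a Borel isomorphism between the full-measure sets of doubly transitive points. Existence of at least one $\sigma$-invariant $\mu$ with $\pi_*\mu = \nu$ is guaranteed by Lemma \ref{rel-kry-bog}; the substance of the statement is that the conclusion holds for \emph{every} such $\mu$, so I would fix an arbitrary one and argue directly, using only $\pi_*\mu=\nu$ together with the hypotheses on $\nu$.

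The key preliminary step is to show $\mu(D_X) = 1$. Since $\nu$ is fully supported, $\sigma$-invariant and ergodic, Proposition \ref{prop-doub-trans-full-meas} gives $\nu(D_Y) = 1$. Now $\pi|_{D_X}$ maps onto $D_Y$ (as already used in Proposition \ref{prop-pres-gibbs}), and because $\pi$ is almost invertible each $y \in D_Y$ has a \emph{unique} $\pi$-preimage; hence that preimage lies in $D_X$, so $\pi^{-1}(D_Y) = D_X$ and $\mu(D_X) = \mu(\pi^{-1}(D_Y)) = \nu(D_Y) = 1$. Thus $\mu$ is concentrated on $D_X$, on which $\pi$ restricts to a Borel bijection onto $D_Y$ with Borel inverse, exactly as in Proposition \ref{prop-pres-gibbs}. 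Unlike the forward direction, here I never need to \emph{lift} holonomies of $\fT_Y$ (the step that required the magic-symbol reduction); I only need to \emph{push down} holonomies of $\fT_X$. Given a holonomy $\gamma$ of $\fT_X$, its restriction to the $\fT_X$-invariant set $D_X$ (Lemma \ref{lemma-doub-trans-invar}) conjugates by $\pi$ to $\psi_\gamma = \pi \circ \gamma \circ \pi^{-1}$ on $D_Y$, which is a holonomy of $\fT_Y$ because $\pi$ is a sliding block code and so preserves agreement off a finite window.

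For nonsingularity I would fix a $\mu$-null Borel set $E$, replace it by $E \cap D_X$ without changing any measures, and write $\fT_X(E) = \bigcup_{\gamma \in \Gamma_X}\gamma(E)$ for a countable generating family $\Gamma_X$ of holonomies. Using injectivity of $\pi$ on the full-measure set $D_X$ one checks $\mu(\gamma(E)) = \nu(\psi_\gamma(\pi(E)))$ and $\nu(\pi(E)) = \mu(E) = 0$; since $\nu$ is $\fT_Y$-nonsingular this forces $\nu(\psi_\gamma(\pi(E))) = 0$ for every $\gamma$, whence $\mu(\fT_X(E)) = 0$. For the Gibbs identity I would take a holonomy $\psi$ of $\fT_X$, push it to $\psi' = \pi \circ \psi \circ \pi^{-1}$ on $D_Y$, and compute, for Borel $E$ in its domain,
\[
\mu(\psi(E)) = \nu(\psi'(\pi(E))) = \int_{\pi(E)} \exp(\phi(y,\psi'(y)))\, d\nu(y) = \int_E \exp\big(\phi(\pi(x), \pi(\psi(x)))\big)\, d\mu(x),
\]
where the middle equality is the Gibbs property of $\nu$ and the last is the change of variables $y = \pi(x)$ against $\pi_*\mu = \nu$, legitimate because $\mu(D_X)=1$ and $\pi|_{D_X}$ is a bijection. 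Comparing with $\mu(\psi(E)) = \int_E D_{\mu,\fT_X}(x,\psi(x))\, d\mu(x)$ and invoking uniqueness of the Radon-Nikodym cocycle identifies $D_{\mu,\fT_X}$ with $\exp(\phi \circ (\pi\times\pi))$ $\mu$-almost everywhere, i.e.\ $\mu$ is Gibbs for $\phi \circ (\pi\times\pi)$.

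The main obstacle is the structural input $\pi^{-1}(D_Y) = D_X$, which yields $\mu(D_X)=1$ and thereby licenses treating $\pi$ as a measure isomorphism; once this is secured, the nonsingularity and Gibbs computations are the computations of Proposition \ref{prop-pres-gibbs} read backwards, and they require no reduction of $\pi$ to a one-block code with a magic symbol.
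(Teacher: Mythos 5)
Your proof is correct, but it takes a genuinely different route from the paper's at the key preliminary step, so let me compare. The paper first assumes $\mu$ ergodic, proves that $\mu$ has full support by an explicit magic-symbol argument (any $w \in \cB(X)$ extends to a word $auwva$ whose image cylinder $[b\Pi(uwv)b]_0$ pulls back exactly to $[auwva]_0$), then applies Proposition \ref{prop-doub-trans-full-meas} to $\mu$ itself to conclude $\mu(D_X)=1$, and finally removes the ergodicity assumption by a convexity argument at the end. You instead get $\mu(D_X)=1$ in one stroke by applying Proposition \ref{prop-doub-trans-full-meas} to $\nu$ (legitimate: $Y$, being a factor of the irreducible shift $X$, is irreducible, and $\nu$ is fully supported, invariant, and ergodic by hypothesis) and pulling back through $\pi^{-1}(D_Y)= D_X$. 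This is a real simplification: it needs neither ergodicity nor full support of $\mu$, it treats every invariant lift uniformly, and it eliminates the closing ergodic-decomposition/convexity step. The remaining work---pushing holonomies of $\fT_X$ down to holonomies of $\fT_Y$ through the Borel isomorphism $\pi|_{D_X}$, then the nonsingularity and change-of-variables computations---coincides with the paper's (the paper, too, pushes holonomies \emph{down} in this proposition, reserving the lifting of holonomies for Proposition \ref{prop-pres-gibbs}). One caveat on attribution: the inclusion $\pi^{-1}(D_Y)\subseteq D_X$, i.e.\ that the unique preimage of a doubly transitive point is itself doubly transitive, is precisely the surjectivity of $\pi|_{D_X}$ onto $D_Y$. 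Citing Proposition \ref{prop-pres-gibbs} for this is fair in the sense that the paper also presupposes it whenever it treats $(\pi|_{D_X})^{-1}\colon D_Y \to D_X$ as defined on all of $D_Y$, but neither of the paper's proofs actually establishes it; its standard proof (Lind--Marcus, \S 9.1) runs through exactly the magic-symbol machinery you claim to avoid: given $u \in \cB(X)$, extend it by irreducibility to $au'a$, note that $b\Pi(u')b$ has a unique preimage word by Proposition 9.1.9, so every occurrence of $b\Pi(u')b$ in $\pi(x)$ forces an occurrence of $u$ in $x$. So the one-block/magic-symbol reduction is indeed unnecessary for your holonomy bookkeeping, but it is hidden inside the cited bijectivity of $\pi|_{D_X}$; since that fact is standard and invariant under recoding, this is a matter of citation rather than a gap.
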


\begin{proof}
As in the proof of Proposition \ref{prop-pres-gibbs}, we assume without loss of generality (that is, up to a conjugacy of $X$) that $\pi$ is a one-block code, induced by a block map $\Pi: \cB_1(X) \to \cB_1(Y)$. We assume further that $\pi$ has a magic symbol $b \in \cB_1(Y)$, which has a unique $\Pi$-preimage $a \in \cB_1(X)$.

Let $\mu$ be a $\sigma$-invariant measure on $X$ with $\pi_* \mu = \nu$; such a $\mu$ exists by Lemma \ref{rel-kry-bog}. We first suppose further that $\mu$ is ergodic. We begin by showing that $\mu$ has full support. Let $w \in \cB(X)$ be arbitrary. By irreducibility, there exist $u,v \in \cB(X)$ such that $auwva \in \cB(X)$. Let $s = \Pi(uwv)$, so that, by the same argument as in the proof of Proposition \ref{prop-pres-gibbs} (using \cite{lind1995introduction}, \S 9.1), $[auwva]_0 = \pi^{-1}([bsb]_0)$. Since $\nu$ has full support by hypothesis, $\nu([bsb]_0) > 0$, so $\mu([w]_0) > 0$, since $[auwva] \subseteq [w]$, with coordinates lined up appropriately. Since $w$ was arbitrary and $\mu$ is $\sigma$-invariant, $\mu$ has full support.

Let $\psi\colon X \to X$ be a holonomy of $\fT_X$. As in the proof of Proposition \ref{prop-pres-gibbs} (but with the holonomies being pushed in the opposite direction), $\pi$ forms a Borel isomorphism between $D_X$ and $D_Y$, and if $x,x' \in D_X$ then $(x,x') \in \fT_X$ if and only if $(\pi(x),\pi(x')) \in \fT_Y$. Therefore, we have a holonomy $\Tilde{\psi} = \pi \circ \psi \circ (\pi|_{D_X})^{-1} : D_Y \to D_Y$ of $\fT_Y$.

Let $N \subset X$ be a Borel set with $\mu$-measure zero. Observe that 
\[
\psi(N) = \psi(N \cap D_X) \cup \psi(N \setminus D_X) %= \left(\psi(N) \cap D_X\right) \cup \left(\psi(N) \setminus D_X\right)
\]
%since $D_X$ is $\fT_X$-invariant.

This yields that $\mu(\psi(N)) = \mu(\psi(N) \cap D_X)$, since $\psi(N \setminus D_X) = \psi(N)\setminus D_X$, and $D_X$ has full measure by ergodicity and Proposition \ref{prop-doub-trans-full-meas}. Moreover, 
\[
\psi(N \cap D_X) = \pi^{-1} \circ \Tilde{\psi} ( \pi(N) \cap D_Y )
\]
Now, $\nu(\pi(N) \cap D_Y) = \mu(N \cap D_X) = 0$, so $\nu(\Tilde{\psi} ( \pi(N) \cap D_Y ))$ = 0, since $\nu$ is $\fT_Y$-nonsingular. Therefore $\mu(\psi(N \cap D_X)) = 0$ as well, so in fact $\mu(\psi(N)) = 0$, which shows that $\mu$ is indeed 
$\fT_X$-nonsingular. A calculation in the same spirit, very similar to the calculation that concludes the proof of Proposition \ref{prop-pres-gibbs}, shows that $\mu$ is Gibbs for $\phi \circ (\pi \times \pi)$, as claimed. This concludes the proof for $\mu$ ergodic; the general case follows by convexity.
\end{proof}

\section{Equilibrium measures}\label{eqm-section}

\begin{defn}\label{topological-pressure-and-equilibrium-states}[topological pressure and equilibrium states]
Let $X$ be a shift space and let $f\colon X \to \R $ be a continuous function. The \textit{topological pressure} of $f$ is the value
\[
P_X(\sigma, f) = \sup \left\{ h(\mu) + \int_X f \, d\mu\right\}
\]
where the supremum is over all $\sigma$-invariant measures $\mu$ on $X$. Any measure $\mu$ attaining the supremum is known as an \textit{equilibrium measure} for $f$.
\end{defn}

\begin{remark}
The pressure is sometimes defined as above (see \cite{keller1998equilibrium}), but is often defined differently (see e.g. \cite{walters1982ergodic}), with the variational property by which we defined it stated as a theorem. However, this variational property is the only one we will need, apart from the two classical theorems below.
\end{remark}

\begin{defn}\label{SV-space}[the function space $\SV(X)$]
Let $X$ be a shift space and let $f\colon X \to \R $ be a continuous function. We define the \textit{$k^{\text{th}}$ variation} of $f$ as
\[
v_k(f) = \sup \{ | f(x) - f(y) | \, : \, x_{[-k,k]} = y_{[-k,k]} \}
\]
for $k \geq 0$; it is convenient to define $\var_{-1}(f) = \| f \|_{\infty}$. We then define
\[
\SVnorm{f}{X} = \sum_{k=0}^{\infty}  v_{k-1}(f)
\]
and define the class of potentials $\SV(X) = \{ f \in C(X) \, : \,  \SVnorm{f}{X} < \infty   \}$.
\end{defn}

It is easy to verify that $\left(\SV(X), \SVnorm{\cdot}{X} \right)$ is a Banach space, and that (\cite{meyerovitch2013gibbs}, \cite{borsato2020conformal}) a potential $f \in \SV(X)$ defines a cocycle $\phi_f$ on $\fT_X$ via the following absolutely convergent series:
\[
\phi_f(x,y) = \sum_{n \in \Z} [ f(\sigma^n y) - f(\sigma^n x)  ]
\]

We refer to a Gibbs measure for $\phi_f$ simply as a Gibbs measure for $f$. With this definition, we recall the following classical theorems, which we state only for the special cases that we require. These theorems were originally proved in a somewhat different form, using the formalism of interactions rather than potentials as we have used, but the methods adapt easily.

\begin{theorem}[Dobrushin; see \cite{ruelle-2004-thermo}]\label{dobrushin}
Let $X$ be a strongly irreducible shift space and let $f \in \SV(X)$. Every $\sigma$-invariant Gibbs measure for $f$ is an equilibrium measure for $f$.
\end{theorem}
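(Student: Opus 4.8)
The plan is to prove that any $\sigma$-invariant Gibbs measure $\mu$ for $f$ attains the supremum defining $P_X(\sigma,f)$. Write $S_n f = \sum_{j=0}^{n-1} f\circ\sigma^j$, let $\mathcal F_n$ denote the partition of $X$ into length-$n$ cylinders $[w]_0$ for $w\in\cB_n(X)$, set $a_w = \sup_{x\in[w]_0} S_n f(x)$, and put $Z_n = \sum_{w\in\cB_n(X)} e^{a_w}$. First I would record the elementary upper bound, valid for every $\sigma$-invariant $\nu$ and using no Gibbs property. The Gibbs (Donsker--Varadhan) inequality $-\sum_w \nu([w]_0)\log\nu([w]_0) + \sum_w \nu([w]_0) a_w \le \log Z_n$, together with $\int S_n f\,d\nu \le \sum_w \nu([w]_0) a_w$ and $\int S_n f\,d\nu = n\int f\,d\nu$, gives
\[
\tfrac1n\Big(-\sum_w \nu([w]_0)\log\nu([w]_0)\Big) + \int f\,d\nu \;\le\; \tfrac1n \log Z_n .
\]
Letting $n\to\infty$, the left side tends to $h(\nu)+\int f\,d\nu$, so $P_X(\sigma,f) = \sup_\nu\big(h(\nu)+\int f\,d\nu\big) \le \liminf_n \tfrac1n\log Z_n$. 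It therefore suffices to exhibit a measure achieving this bound, and I claim $\mu$ does.

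The key is a Bowen--Gibbs estimate for $\mu$: a constant $C\ge 1$, independent of $n$ and $w$, with $C^{-1} \le \mu([w]_0) Z_n e^{-a_w} \le C$ for all $w\in\cB_n(X)$. Three ingredients would establish it. \emph{Distortion:} since for $x,y\in[w]_0$ the points $\sigma^j x, \sigma^j y$ agree on a window of radius $\min(j,n-1-j)$, summable variation yields $|S_n f(x)-S_n f(y)| \le 2\sum_{k\ge 0} v_k(f) < \infty$, so $\phi_f$ and the weights $a_w$ are unambiguous up to a uniform additive error. \emph{Ratios:} the Gibbs property controls relative measures exactly, since for equal-length words $w,w'$ sharing an admissible exterior $E$ the swap holonomy $\psi_{w,w',0,n-1}$ has Radon--Nikodym cocycle $\exp(\phi_f)$, whence $\mu([w']_0\cap E)/\mu([w]_0\cap E) \asymp e^{a_{w'}-a_w}$ within the distortion constant. \emph{Normalisation:} strong irreducibility promotes these relative bounds to the absolute two-sided estimate, by inserting a buffer of length $\le r$ to realise arbitrary words against arbitrary exteriors and then dividing by $\sum_w \mu([w]_0)=1$. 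That every $\mu([w]_0)$ is strictly positive---i.e. that $\mu$ has full support---follows from the proof of Proposition \ref{strong-irred-support} applied verbatim, which used only that the Radon--Nikodym cocycle is positive, here $\exp(\phi_f)>0$, and not that it equals $1$.

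Granting the Bowen--Gibbs bound, the equality follows by a short computation. Substituting $\log\mu([w]_0) = a_w - \log Z_n + O(1)$ into $H_\mu(\mathcal F_n) = -\sum_w \mu([w]_0)\log\mu([w]_0)$ gives $H_\mu(\mathcal F_n) = \log Z_n - \sum_w \mu([w]_0)a_w + O(1)$, while the distortion bound gives $\int S_n f\,d\mu = \sum_w \mu([w]_0)a_w + O(1)$; adding and dividing by $n$,
\[
\tfrac1n H_\mu(\mathcal F_n) + \int f\,d\mu \;=\; \tfrac1n\log Z_n + O(1/n).
\]
As $n\to\infty$ the left side converges to $h(\mu)+\int f\,d\mu$, which forces $\tfrac1n\log Z_n$ to converge to the same limit. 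Combining with the first paragraph, $P_X(\sigma,f) \le \lim_n\tfrac1n\log Z_n = h(\mu)+\int f\,d\mu \le P_X(\sigma,f)$, the last inequality because $\mu$ is itself admissible in the supremum. Hence $h(\mu)+\int f\,d\mu = P_X(\sigma,f)$, so $\mu$ is an equilibrium measure. When $\mu$ is ergodic one could instead combine the Bowen--Gibbs bound with Shannon--McMillan--Breiman and Birkhoff's theorem, reducing the general case by an ergodic decomposition.

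The main obstacle is the Bowen--Gibbs estimate on a strongly irreducible shift that need not be of finite type. The conformal property compares only configurations with a common exterior, so passing from ratios to absolute cylinder probabilities---and in the process pinning down the normalisation $Z_n$---requires using strong irreducibility carefully to fill the bounded gaps needed to confront arbitrary pairs of words with a shared boundary, all while keeping the accumulated distortion uniformly bounded through summable variation rather than merely $o(n)$. This is precisely where both hypotheses are indispensable, and where the argument is most delicate.
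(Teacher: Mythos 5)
The paper does not actually prove this theorem: it is invoked as a classical black box, citing Ruelle's book, where the argument runs through the formalism of interactions and the mixing condition (D) (strictly weaker than strong irreducibility), via relative-entropy/variational arguments rather than anything resembling your sketch. So the comparison is between your route and the classical one, and your route is genuinely different: a direct, self-contained symbolic proof through a Bowen--Gibbs estimate. I believe it works, with one clarification. Your ``Ratios'' step cannot be taken literally on a shift space that is not of finite type: two arbitrary words $w,w'\in\cB_n(X)$ need not share \emph{any} admissible exterior, so the swap holonomy $\psi_{w,w',0,n-1}$ may be vacuous. The mechanism that actually works is the buffered swap that your ``Normalisation'' step gestures at, and it does yield a uniform constant: given $x\in[w]_0$, strong irreducibility plus compactness (exactly as in the proof of Proposition \ref{strong-irred-support}) provides buffers $\alpha',\beta'$ of length $r$ with $x_{(-\infty,-r-1]}\,\alpha'w'\beta'\,x_{[n+r,\infty)}\in X$; partition $[w]_0$ according to the extension $u=x_{[-r,n+r-1]}$ and the chosen buffer pair (at most $|\cA|^{4r}$ classes, independent of $n$), and apply the holonomy swapping $u$ with $\alpha'w'\beta'$, whose Jacobian is $e^{a_{w'}-a_w}$ up to a constant by summable variation. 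This gives $\mu([w]_0)\le K\,e^{a_w-a_{w'}}\mu([w']_0)$ for every pair, with $K$ independent of $n$; fixing $w'$ and summing over $w$ gives the lower Gibbs bound (using $\sum_w\mu([w]_0)=1$), while fixing $w$, multiplying by $e^{a_{w'}}$, and summing over $w'$ gives the upper bound. The remaining steps (existence of $\lim\frac1n\log Z_n$ by subadditivity of the sup-weights, the entropy computation, and the squeeze against the variational upper bound) are standard and correct, and invariance of $\mu$ is used only where you use it.

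As for what each approach buys: the classical proof is less elementary but holds under condition (D) and never establishes (nor needs) Bowen-type bounds---which is just as well, since DLR/conformal Gibbs measures and Bowen--Gibbs measures differ in general, as the paper itself remarks after Proposition \ref{prop-pres-gibbs}. Your proof, in the strongly irreducible summable-variation setting, proves strictly more along the way: every $\sigma$-invariant conformal Gibbs measure satisfies two-sided Bowen--Gibbs bounds with constants uniform in $n$, from which full support and the equilibrium property both fall out (and, with a little more work, mutual absolute continuity of all such measures). The price is that the argument leans hard on the bounded gluing length $r$ to keep the buffer count and distortion uniform, so it would not extend to condition (D), and it is special to the one-dimensional cylinder structure, whereas the classical argument generalizes.
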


\begin{theorem}[Lanford-Ruelle; see \cite{meyerovitch2013gibbs}]\label{lf}
Let $X$ be a shift of finite type and let $f \in \SV(X)$. Every equilibrium measure for $f$ is a Gibbs measure for $f$.
\end{theorem}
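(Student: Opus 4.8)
The plan is to work directly with the Radon--Nikodym-cocycle definition of a Gibbs measure and to extract the cocycle identity from the variational equality $h(\mu)+\int_X f\,d\mu = P_X(\sigma,f)$ that defines an equilibrium measure. First I would reduce to generators: since $\fT_X$ is generated by the countable group of block-exchange holonomies $\psi_{u,v,a,b}$ of \S\ref{gibbs-subsec}, and since both $D_{\mu,\fT_X}$ and $\exp(\phi_f)$ are multiplicative cocycles, it suffices to prove $\frac{d(\mu\circ\psi_{u,v,a,b})}{d\mu}(x)=\exp\!\big(\phi_f(x,\psi_{u,v,a,b}(x))\big)$ for each such generator. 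Read in terms of conditional measures, this is precisely the statement that, conditioned on the coordinates outside $[a,b]$, the odds under $\mu$ of seeing the block $u$ versus the block $v$ equal the exponentiated energy difference $\exp(\phi_f)$; that is, an equilibrium measure has Gibbsian conditional distributions on every finite window. By the equivalence recalled in \S\ref{gibbs-subsec} (following \cite{borsato2020conformal, keller1998equilibrium}), this is in turn equivalent to the classical Dobrushin--Lanford--Ruelle equations.

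The engine of the proof is the Gibbs (Jensen) inequality applied to finite cylinders. Writing $\alpha_0^{n-1}$ for the partition of $X$ into cylinders of length $n$ and $S_nf = \sum_{k=0}^{n-1} f\circ\sigma^k$ for the Birkhoff sum, one has $h(\mu)+\int_X f\,d\mu = \lim_{n}\tfrac1n\big[H_\mu(\alpha_0^{n-1}) + \int_X S_nf\,d\mu\big]$. For fixed $n$, applying the elementary bound $\sum_w p_w(\log a_w - \log p_w)\le \log\sum_w a_w$, with equality iff $p_w\propto a_w$, to $p_w=\mu([w]_0)$ and $a_w = \exp(\sup_{[w]_0} S_nf)$ over $w\in\cB_n(X)$ gives
\[
H_\mu(\alpha_0^{n-1}) + \int_X S_nf\,d\mu \;\le\; \log\!\sum_{w\in\cB_n(X)}\exp\big(\sup_{[w]_0}S_nf\big) + o(n),
\]
which upon dividing by $n$ is exactly the variational upper bound $h(\mu)+\int f\,d\mu\le P_X(\sigma,f)$. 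Because $\mu$ is an equilibrium measure, this inequality is saturated in the limit, which forces $\mu([w]_0)$ to agree with the normalized weights $\exp(S_nf(w))/Z_n$ up to subexponential errors. Upgrading this asymptotic agreement of cylinder probabilities to the exact conditional relations on a single window $[a,b]$---and hence to the cocycle identity on each generator $\psi_{u,v,a,b}$---is then carried out by a martingale/consistency argument along the increasing tower of cylinder $\sigma$-algebras.

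The step I expect to be the main obstacle is the interplay between the finite window of the holonomy and the infinite range of $f$. Since $f\in\SV(X)$ need not be locally constant, exchanging $u$ for $v$ on $[a,b]$ perturbs $f(\sigma^n x)$ at \emph{every} coordinate $n$, and one must check that these perturbations sum to the absolutely convergent cocycle $\phi_f(x,\psi(x))=\sum_{n\in\Z}[f(\sigma^n\psi(x))-f(\sigma^n x)]$ while the associated boundary corrections contribute only $o(n)$ to the finite-volume estimate above. This tail control is exactly where the hypothesis $\SVnorm{f}{X}<\infty$ enters: summable variation guarantees both that $\phi_f$ is well defined and that replacing $\sup_{[w]_0}S_nf$ by $S_nf$ costs only a subexponential factor. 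Two further technical points need care. First, making the conditional reformulation rigorous requires disintegrating $\mu$ over the exterior $\sigma$-algebra and invoking the affineness and upper semicontinuity of $\mu\mapsto h(\mu)$, with martingale convergence of conditional expectations along the cylinder filtration converting the asymptotic equality into exact conditional formulas. Second, and reassuringly, no irreducibility of $X$ is needed, consistent with the remark in the introduction that Lanford--Ruelle holds for all shifts of finite type: the entire argument is local, so irreducibility plays no role.
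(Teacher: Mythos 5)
The first thing to note is that the paper does not prove Theorem \ref{lf} at all: it is imported as a black box from the literature (Lanford--Ruelle, via \cite{meyerovitch2013gibbs}), so your sketch must be measured against the known proofs of the classical theorem rather than against anything in this paper. Against that standard, there is a genuine gap at the central step. Saturation of your Jensen inequality ``in the limit'' yields only that the Jensen defect---the relative entropy $D_n$ between the $n$-cylinder marginal of $\mu$ and the normalized weights $\exp\bigl(\sup_{[w]_0}S_nf\bigr)/Z_n$---is $o(n)$. This does \emph{not} force cylinder probabilities to agree with Gibbs weights up to subexponential errors: relative entropy $o(n)$ gives no per-cylinder control (at best a Markov-inequality bound in one direction, for $\mu$-most words). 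Worse, vanishing relative entropy per site is essentially \emph{equivalent} to being an equilibrium measure, so the promised ``upgrade'' from this asymptotic statement to the exact cocycle identities is not a technical afterthought to be handled by a consistency argument---it is the entire content of the theorem. The proposed mechanism cannot deliver it in any case: the Gibbs property concerns conditioning on the \emph{exterior} $\sigma$-algebra of a finite window, not the interior cylinder filtration along which you propose martingale convergence, and the unnormalized weights $\exp\bigl(\sup_{[w]_0}S_nf\bigr)$ do not even form a consistent family of measures to which such an argument could converge.

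The standard proofs (Lanford--Ruelle's original argument, Keller's book, Meyerovitch's paper) run the variational principle in the opposite direction, via a first-variation or perturbation argument: assume the conditional distributions of $\mu$ on some window, given the exterior, fail to be proportional to $\exp(\phi_f)$; construct a competitor measure by redistributing conditional mass along the holonomies $\psi_{u,v,a,b}$ (the finite-type condition guarantees the exchanged configurations remain in $X$); average over shifts to restore $\sigma$-invariance; and show via strict positivity of relative entropy, with summable variation controlling the boundary terms, that $h+\int f\,d\mu$ strictly increases, contradicting equilibrium. Your outline has correct ingredients---reduction to the generating holonomies, the role of $\SVnorm{f}{X}<\infty$ in making $\phi_f$ well defined and controlling tails, and the observation that locality is why irreducibility is unnecessary---but without the perturbation step the argument as proposed does not close.
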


We also require the following result, closely following Lemma 4.5 in \cite{yoo2018decomposition}.

\begin{proposition}\label{yoo-sv}
Let $X$ be an irreducible shift of finite type. Any equilibrium measure for $f \in \SV(X)$ has full support.
\end{proposition}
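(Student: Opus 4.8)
The plan is to argue by contradiction, reducing the claim to a strict monotonicity property of pressure under passage to a proper sub-shift of finite type. Suppose $\mu$ is an equilibrium measure for $f$ that does not have full support. Since cylinders form a basis for the topology and $\mu$ is $\sigma$-invariant, there is then a word $w \in \cB(X)$ with $\mu([w]_i) = 0$ for every $i \in \Z$. Writing $X_w = \{ x \in X : x_{[i,\, i+|w|-1]} \neq w \text{ for all } i \}$ for the shift of finite type obtained by additionally forbidding $w$, this says exactly that $\mu(X_w) = 1$, so that $\mu$ may be regarded as a $\sigma$-invariant measure on $X_w$. Moreover $X_w \subsetneq X$ is proper, since $w \in \cB(X)$ occurs in some point of $X$, which is therefore excluded from $X_w$. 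Note that this reduction needs no ergodic decomposition: the single word $w$ already carries $\mu$ off a proper sub-shift.

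Since the measure-theoretic entropy of $\mu$ is intrinsic to the system $(\mu,\sigma)$ and does not depend on the ambient shift, and since $\int_X f \, d\mu = \int_{X_w} f|_{X_w} \, d\mu$, the variational principle applied on $X_w$ gives
\[
P_X(\sigma, f) = h(\mu) + \int_X f \, d\mu \leq P_{X_w}(\sigma, f|_{X_w}).
\]
Thus it suffices to prove the strict inequality $P_{X_w}(\sigma, f|_{X_w}) < P_X(\sigma, f)$, which contradicts the display above and forces $\mu$ to have full support. I would emphasize that no appeal to uniqueness of the equilibrium measure is made; indeed, this proposition is one of the ingredients one would use to establish such uniqueness, so the argument must be elementary.

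The strict pressure inequality is the heart of the matter, and it is here that irreducibility of $X$ enters. I would prove it using the partition-function description of pressure, which for a shift of finite type agrees with the variational definition: $P_X(\sigma, f) = \lim_n \tfrac{1}{n} \log \sum_{v \in \cB_n(X)} \exp\bigl(S_n f(x^v)\bigr)$, where $S_n f = \sum_{k=0}^{n-1} f \circ \sigma^k$ and $x^v$ is any chosen point of $[v]_0$. The hypothesis $f \in \SV(X)$ supplies the bounded-distortion estimate $|S_n f(x) - S_n f(y)| \leq 2\sum_{k \geq 0} v_k(f) < \infty$ whenever $x_{[0,n-1]} = y_{[0,n-1]}$, so these sums are independent of the choices $x^v$ up to a bounded factor. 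The word $w$ contributes to the sum over $\cB_n(X)$ but never to the sum over $\cB_n(X_w)$, and the goal is to show that deleting all $w$-containing words costs a definite exponential rate. I would organize this through a renewal decomposition of admissible words according to the occurrences of $w$: every word of $\cB_n(X)$ either lies in $\cB_n(X_w)$ or splits, around its (possibly overlapping) occurrences of $w$, into $w$-free blocks joined by copies of $w$. Using irreducibility to glue blocks, with connecting words of bounded length and hence a bounded additive change in $S_n f$, a generating-function analysis of the resulting renewal relation shows that the weighted growth rate of $\cB_n(X_w)$ is strictly below that of $\cB_n(X)$. The main obstacle, and the step demanding the most care, is precisely this renewal estimate, in particular handling overlaps and the bookkeeping of connectors; alternatively, one may obtain the strict inequality from strict monotonicity of the leading eigenvalue of the Ruelle transfer operator under forbidding an admissible word in an irreducible system, the pressure analogue of the classical fact that a proper sub-shift of finite type of an irreducible shift of finite type has strictly smaller entropy.
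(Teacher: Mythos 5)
Your reduction is correct as far as it goes: if the equilibrium measure $\mu$ is not fully supported, then $\sigma$-invariance yields a single word $w \in \cB(X)$ with $\mu([w]_i) = 0$ for all $i$, so $\mu$ is carried by the proper sub-shift of finite type $X_w$; since entropy is intrinsic to the measure-preserving system, the variational principle on $X_w$ gives $P_X(\sigma,f) \le P_{X_w}(\sigma, f|_{X_w})$, and a strict inequality $P_{X_w}(\sigma, f|_{X_w}) < P_X(\sigma,f)$ would finish the proof. The problem is that this strict inequality---which you rightly call the heart of the matter---is never actually proved. The renewal/generating-function argument is only gestured at: handling overlapping occurrences of $w$, the connectors of bounded (but not constant) length supplied by irreducibility, and the weighted bookkeeping under a potential that is merely of summable variation is precisely the hard content, and it is absent. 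The alternative you offer, strict monotonicity of the leading eigenvalue of the Ruelle transfer operator under forbidding a word, is not an innocuous citation at this level of generality: for $f \in \SV(X)$ (not locally constant, not H\"older), the relevant Ruelle--Perron--Frobenius theory already produces a fully supported Gibbs equilibrium state, i.e., it essentially contains the proposition you are trying to prove. Nor can one get the strict inequality cheaply by approximating $f$ with locally constant potentials (where the eigenvalue argument is elementary Perron--Frobenius), because strict inequalities do not survive the limit without a uniform gap. So as written, the proposal is a correct reduction of the proposition to an unproved claim of essentially equal difficulty.

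It is worth contrasting this with the paper's proof, which is entirely soft. The paper decomposes the irreducible SFT $X$ of period $p$ into its cyclically moving classes $X_0,\dots,X_{p-1}$, shows that $\mu$ is an equilibrium measure for $f$ on $X$ if and only if its normalized restriction $\mu'$ is an equilibrium measure for $R_p f = \sum_{j=0}^{p-1} f\circ\sigma^{-j}$ on the mixing SFT $(X_0,\sigma^p)$ (after checking $R_p f \in \SV(X_0)$), and then in the mixing case invokes the classical Lanford--Ruelle theorem (Theorem \ref{lf}) together with Proposition \ref{strong-irred-support} (every $\fT_X$-nonsingular measure on a strongly irreducible shift has full support). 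In fact, the most economical way to close the gap in your argument is to prove your strict pressure inequality in the mixing case by exactly this route: if $P_{X_w} = P_X$, an equilibrium measure on $X_w$ (which exists by expansiveness) would be an equilibrium measure on $X$, hence Gibbs by Theorem \ref{lf}, hence fully supported by Proposition \ref{strong-irred-support}, a contradiction. But at that point you would be reproducing the paper's proof with an extra layer of contradiction wrapped around it, so the detour through pressure monotonicity buys nothing.
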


\begin{proof}
When $X$ is a mixing shift of finite type, the result follows from Theorem \ref{lf} and Proposition \ref{strong-irred-support}. Now, let $X$ be an irreducible shift of finite type such that $X$ has period $p$. We decompose $X$ into $p$ cyclically moving classes (see \cite{lind1995introduction}, \S 4.5), i.e., $X = \sqcup_{i=0}^{p-1} X_i$ where $\sigma(X_i) = X_{i+1 \mod p}$ and each $X_i$ is mixing with respect to $\sigma^p$. In particular, $\sigma^p$ is a homeomorphism of each clopen set $X_i$; the system $(X, \sigma^p)$ is known as the $p^{\text{th}}$ higher power shift of $X$ (\cite{lind1995introduction}, \S 1.4), and the $X_i$ are its cyclically moving classes. 

There is a bijection between $\sigma$-invariant measures $\mu$ on $X$ and $\sigma^p$-invariant measures $\mu'$ on $X_0$, which is given by normalized restriction in one direction and averaging in the other. That is, we take $\mu' = p \mu|_{X_0}$ and $\mu = p^{-1 }\sum_{j=0}^{p-1} \sigma^{j}_* \mu'$. Observe that $\mu$ has full support if and only if $\mu'$ does. 
% Indeed, if $U \subset X$ is open, then for some $0 \leq j \leq p-1$, we have $U \cap X_1$ nonempty and open, so if $\mu'$ has full support, then $0 < \mu'(U \cap X_1) = p\mu(U)$. 
Moreover, we have
\[\label{eq-pressure-mixing-comp}
h_X(\mu) + \int_X f \, d\mu = \frac{1}{p} \left( h_{X_0}(\mu') + \int_{X_0} R_p f \, d\mu'         \right)
\]
where $R_p f =  \sum_{j=0}^{p-1} f \circ \sigma^{-j}$. Therefore $\mu$ is an equilibrium measure for $f$ on $X$ precisely when $\mu'$ is an equilibrium measure for $R_p f$ on $X_0$. So far, our discussion is essentially identical to Yoo's.

We now need to show that $R_p f \in \SV(X_0)$; this is where we part from Yoo, who considers a different class of potentials.  Observe that, for each $j \in \{0, \dots, p-1\}$, the $k$-th variation of a function on $X_0$ behaves like the $kp$-th variation of that function on $X$, so we have
\[
\SVnorm{f \circ \sigma^{-j}}{X_0} \leq \sum_{k=0}^{\infty} v_{kp}(f \circ \sigma^{-j}) \leq \frac{1}{p} \SVnorm{f \circ \sigma^{-j}}{X}
\]
It is also easy to check that $\SV(X)$ is closed under translation, showing that indeed $R_p f \in \SV(X_0)$. 

Let $\mu$  be an equilibrium measure for $f$. Then the unique $\mu'$ on $X_0$ such that $\mu = p^{-1 }\sum_{j=0}^{p-1} \sigma^{j}_* \mu'$ is an equilibrium measure for $R_p f$; since $X_0$ is mixing, $\mu'$ is fully supported. Thus $\mu$ is fully supported as well.
\end{proof}

We also require a result showing that equilibrium measures lift through almost invertible codes, indeed through any finite-to-one codes.
\begin{lemma}\label{lift-eqm}
Let $X$ be an irreducible shift of finite type, $Y$ a sofic shift, and $\pi: X \to Y$ a finite-to-one factor code. Let $f\colon Y \to \R$ be a function with summable variation, that is, $f \in \SV(Y)$, and let $\nu$ be an equilibrium measure for $f$. Then there exists an equilibrium measure $\mu$ for $f \circ \pi \in \SV(X)$ with $\pi_* \mu = \nu$. If $\nu$ is ergodic, then $\mu$ can be chosen to be ergodic as well.
\end{lemma}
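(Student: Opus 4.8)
The plan is to produce the lift $\mu$ via the relative Krylov--Bogliubov lemma and then to argue that \emph{any} invariant lift of an equilibrium measure is automatically an equilibrium measure, the engine being that finite-to-one codes preserve measure-theoretic entropy. First I would record that $f \circ \pi \in \SV(X)$: after conjugating $X$ so that $\pi$ is a one-block code (as in \S\ref{symb-din-subsection}), the implication that $x_{[-k,k]} = y_{[-k,k]}$ forces $\pi(x)_{[-k,k]} = \pi(y)_{[-k,k]}$ gives $v_k(f \circ \pi) \le v_k(f)$ for every $k$, whence $\SVnorm{f \circ \pi}{X} \le \SVnorm{f}{Y} < \infty$.

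The key input is that a finite-to-one factor code $\pi$ on an irreducible shift of finite type is bounded-to-one, with a uniform bound $M$ on the number of preimages of any point (\cite{lind1995introduction}), and therefore preserves measure-theoretic entropy: $h_X(\lambda) = h_Y(\pi_* \lambda)$ for every $\sigma$-invariant measure $\lambda$ on $X$. I would obtain this from the Abramov--Rokhlin relative entropy formula $h_X(\lambda) = h_Y(\pi_* \lambda) + h_X(\lambda \mid \pi^{-1}\mathcal{B}_Y)$, where $\mathcal{B}_Y$ is the Borel $\sigma$-algebra of $Y$; the uniform fiber bound gives $H_\lambda(\mathcal{B}_X \mid \pi^{-1}\mathcal{B}_Y) \le \log M$, and dividing by $n$ along a generating refinement forces the relative (fiber) entropy to vanish. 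With entropy preservation in hand, I would identify the two pressures $P_X(\sigma, f \circ \pi) = P_Y(\sigma, f)$: for any $\sigma$-invariant $\lambda$ on $X$, pushing forward gives
\[
h_X(\lambda) + \int_X f \circ \pi \, d\lambda = h_Y(\pi_* \lambda) + \int_Y f \, d\pi_*\lambda \le P_Y(\sigma, f),
\]
so $P_X(\sigma, f \circ \pi) \le P_Y(\sigma, f)$, while lifting an arbitrary invariant measure on $Y$ through Lemma \ref{rel-kry-bog} yields the reverse inequality.

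Finally, I would invoke Lemma \ref{rel-kry-bog} to choose a $\sigma$-invariant $\mu$ on $X$ with $\pi_* \mu = \nu$, taking $\mu$ ergodic when $\nu$ is ergodic. Since $\pi_* \mu = \nu$ and $\pi$ preserves entropy,
\[
h_X(\mu) + \int_X f \circ \pi \, d\mu = h_Y(\nu) + \int_Y f \, d\nu = P_Y(\sigma, f) = P_X(\sigma, f \circ \pi),
\]
using that $\nu$ is an equilibrium measure for $f$ in the middle equality. Thus $\mu$ attains the pressure and is an equilibrium measure for $f \circ \pi$.

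I expect the entropy-preservation step to be the main obstacle. The statement I need holds for \emph{every} invariant measure, not merely for the measure of maximal entropy, so quoting preservation of \emph{topological} entropy does not suffice; I must instead justify the uniform fiber bound for finite-to-one codes on irreducible shifts of finite type and deduce the vanishing of the relative entropy. Once that is secured, the remainder is a routine two-sided pressure comparison together with the lifting guaranteed by Lemma \ref{rel-kry-bog}.
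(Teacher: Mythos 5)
Your proposal follows essentially the same route as the paper's proof: lift $\nu$ through Lemma \ref{rel-kry-bog}, use the Abramov--Rokhlin formula to get $h_X(\lambda) = h_Y(\pi_*\lambda)$ for every invariant $\lambda$ (the paper cites this directly for finite-to-one codes), and conclude with the identical two-sided pressure comparison showing $P_X(\sigma, f\circ\pi) = P_Y(\sigma,f)$. The additional details you supply---the one-block verification that $f \circ \pi \in \SV(X)$ and the uniform-fiber-bound argument for vanishing relative entropy---are correct elaborations of steps the paper asserts or delegates to citations, so there is no gap.
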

\begin{proof}
By Lemma \ref{rel-kry-bog}, there exists a $\sigma$-invariant measure $\mu$ on $X$ with $\pi_* \mu = \nu$, and we can choose $\mu$ to be ergodic whenever $\nu$ is ergodic. We now show that any such $\mu$ is an equilibrium measure for $f \circ \pi$. Since $\pi$ is finite-to-one, the Abramov-Rokhlin formula \cite{adler1963skew} shows that $h(\mu) = h(\nu)$. Then
\[
    h(\mu) + \int_X f \circ \pi \, d\mu = h(\nu) + \int_Y f \, d\nu = P_Y(\sigma, f)
\]
We need to show that $P_X(\sigma, f \circ \pi) = P_Y(\sigma, f)$. Clearly $P_X(\sigma, f \circ \pi) \geq P_Y(\sigma, f)$, so we only need the reverse inequality. Let $\lambda$ be any $\sigma$-invariant measure on $X$. Again $h(\pi_* \lambda) = h(\lambda)$ by the Abramov-Rokhlin formula, so we have
\[
    h(\lambda) + \int_X f \circ \pi \, d\lambda = h(\pi_* \lambda) + \int_Y f \, d\pi_* \lambda \leq P_Y(f)
\]
Therefore $P_X(\sigma, f \circ \pi) = P_Y(\sigma, f)$, so $\mu$ is indeed an equilibrium measure for $f \circ \pi$.
\end{proof}

We can now state and prove the first main result of this section.

\begin{theorem}[Lanford-Ruelle theorem for irreducible sofic shifts]\label{sofic-lf}
Let $Y$ be an irreducible sofic shift and let $f\colon Y \to \R$ be a potential with summable variation, that is, $f \in SV(Y)$. Let $\nu$ be an equilibrium measure for $f$. Then $\nu$ is a Gibbs measure for $f$. 
\end{theorem}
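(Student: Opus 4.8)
The plan is to reduce the Lanford--Ruelle theorem on the sofic shift $Y$ to its classical counterpart on an irreducible shift of finite type, by covering $Y$ and transporting Gibbsianness through Proposition \ref{prop-pres-gibbs}. The first step is to fix an almost invertible factor code $\pi\colon X \to Y$ with $X$ an irreducible shift of finite type. I would take $X$ to be the edge shift of the Fischer cover (the minimal right-resolving presentation) of $Y$, which is irreducible since $Y$ is, and whose covering map is right-resolving of degree one, hence almost invertible (\cite{lind1995introduction}). This is precisely the kind of cover, with a magic word, whose structure already underlies the proofs of Propositions \ref{prop-pres-gibbs} and \ref{sofic-gibbs-pushfwd}.

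I would first treat the case that $\nu$ is ergodic. By Lemma \ref{lift-eqm}, lift $\nu$ to an \emph{ergodic} equilibrium measure $\mu$ for $f \circ \pi \in \SV(X)$ with $\pi_* \mu = \nu$. Since $X$ is an irreducible shift of finite type and $f \circ \pi \in \SV(X)$, Proposition \ref{yoo-sv} shows $\mu$ has full support, and the classical Lanford--Ruelle theorem (Theorem \ref{lf}) shows $\mu$ is Gibbs for $f\circ\pi$, hence in particular $\fT_X$-nonsingular. Thus $\mu$ satisfies every hypothesis of Proposition \ref{prop-pres-gibbs}, which yields that $\nu = \pi_*\mu$ is $\fT_Y$-nonsingular and Gibbs for the cocycle $\phi_{f\circ\pi} \circ (\pi^{-1} \times \pi^{-1})$. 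It remains to identify this cocycle with $\phi_f$. Using shift-equivariance $\pi\sigma = \sigma\pi$, the absolutely convergent defining series give $\phi_{f\circ\pi}(x,x') = \sum_{n\in\Z}[f(\sigma^n\pi(x')) - f(\sigma^n\pi(x))] = \phi_f(\pi(x),\pi(x'))$, so that $\phi_{f\circ\pi} = \phi_f \circ (\pi\times\pi)$ on $\fT_X$. Since $\mu$ is fully supported, so is $\nu = \pi_*\mu$, whence Proposition \ref{prop-doub-trans-full-meas} gives $\nu(D_Y) = 1$; thus $\pi^{-1}$ is defined $\nu$-almost everywhere with $\pi\circ\pi^{-1} = \mathrm{id}$ on $D_Y$, and therefore $\phi_{f\circ\pi}\circ(\pi^{-1}\times\pi^{-1}) = \phi_f$ holds $\nu$-almost everywhere. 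Hence $\nu$ is Gibbs for $f$.

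For a general equilibrium measure $\nu$, I would pass to its ergodic decomposition $\nu = \int \nu_\omega \, dP(\omega)$. Because $\mu \mapsto h(\mu) + \int f\,d\mu$ is affine on invariant measures, the decomposition integrates to the pressure, so $P$-almost every $\nu_\omega$ attains the pressure and is thus an ergodic equilibrium measure, hence Gibbs for $\phi_f$ by the previous step, all with the same Radon--Nikodym cocycle $\exp(\phi_f)$. A Fubini computation then assembles these into a statement about $\nu$: for a holonomy $\psi\colon A \to B$ of $\fT_Y$ and $E \subseteq A$,
\[
\nu(\psi(E)) = \int \nu_\omega(\psi(E))\,dP(\omega) = \int \int_E \exp\bigl(\phi_f(x,\psi(x))\bigr)\,d\nu_\omega(x)\,dP(\omega) = \int_E \exp\bigl(\phi_f(x,\psi(x))\bigr)\,d\nu(x),
\]
and the same argument applied to a null set shows $\nu$ is $\fT_Y$-nonsingular. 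Hence $\nu$ is Gibbs for $f$, completing the proof.

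The genuinely new content is already packaged inside Proposition \ref{prop-pres-gibbs}, so the remaining work is to verify its hypotheses and to handle a possibly non-ergodic $\nu$. I expect the step requiring the most care to be this reduction to the ergodic case: Proposition \ref{prop-pres-gibbs} assumes ergodicity, which a general equilibrium measure need not have, so I must justify both that the ergodic components of an equilibrium measure remain equilibrium measures and that Gibbsianness---nonsingularity together with a prescribed Radon--Nikodym cocycle---survives the ergodic-decomposition integral.
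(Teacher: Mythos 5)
Your proof is correct, and its core---the ergodic case---is essentially the paper's own argument: lift $\nu$ through the minimal right-resolving presentation using Lemma \ref{lift-eqm}, apply Theorem \ref{lf} and Proposition \ref{yoo-sv} to obtain a fully supported, ergodic Gibbs lift $\mu$, and push Gibbsianness back down with Proposition \ref{prop-pres-gibbs}. Your explicit verification that $\phi_{f\circ\pi} = \phi_f\circ(\pi\times\pi)$ on $\fT_X$, so that the pushed-forward cocycle $\phi_{f\circ\pi}\circ(\pi^{-1}\times\pi^{-1})$ equals $\phi_f$ $\nu$-almost everywhere, is a detail the paper leaves implicit and is worth spelling out. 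Where you genuinely diverge is the passage from ergodic to general equilibrium measures. The paper argues by convexity: the sets of equilibrium measures and of $\sigma$-invariant Gibbs measures are compact and convex (\cite{keller1998equilibrium}), their extreme points are the ergodic members, and the Krein-Milman theorem (\cite{simon2011convexity}) transports the ergodic-case conclusion to the whole set. You instead use the ergodic decomposition $\nu = \int \nu_\omega\,dP(\omega)$ and a Fubini/disintegration computation. Both routes work: yours is more hands-on and avoids needing closedness and compactness of the set of invariant Gibbs measures, at the price of the measure-theoretic bookkeeping you flag; the paper's is shorter once the cited functional-analytic facts are granted. One point in your write-up needs repair: affineness of $\mu \mapsto h(\mu) + \int f\,d\mu$ only controls finite convex combinations, whereas an ergodic decomposition is an integral, so to conclude that $P$-a.e.\ component attains the pressure you must invoke the theorem that entropy integrates over the ergodic decomposition (Jacobs' theorem; see \cite{walters1982ergodic}), after which the variational principle forces $h(\nu_\omega) + \int f\,d\nu_\omega = P_Y(\sigma,f)$ for $P$-a.e.\ $\omega$. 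With that citation in place---and noting that for each fixed holonomy $\psi$ and Borel $E$ the Gibbs identity holds for $P$-a.e.\ $\omega$, so the interchange of integrals is legitimate---your argument is complete.
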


\begin{proof}
Since $Y$ is an irreducible sofic shift, there exist an irreducible shift of finite type $X$ and an almost invertible factor code $\pi\colon X \to Y$, for instance the minimal right-resolving presentation (\cite{lind1995introduction}, \S 3.3, \S 9.2). 

Suppose first that $\nu$ is ergodic. By Lemma \ref{lift-eqm}, there exists an ergodic equilibrium measure $\mu$ for $f \circ \pi$ such that $\nu = \pi_{*}\mu$.  Then, by Theorem \ref{lf}, $\mu$ is a Gibbs measure for $f \circ \pi$, and by Proposition \ref{yoo-sv}, $\mu$ has full support.  Then, by Proposition \ref{prop-pres-gibbs}, which requires ergodicity and full support, $\nu$ is a Gibbs measure for $f$.

The general result follows from the ergodic case via the Krein-Milman theorem \cite{simon2011convexity}, together with the compactness and convexity of the sets of Gibbs and equilibrium measures \cite{keller1998equilibrium} and the fact that the extreme points of these sets are precisely their ergodic elements.
\end{proof}

We now change course and proceed towards a Dobrushin type theorem for irreducible sofic shifts, which is the second main result of this section.

\begin{lemma}\label{lemma-period-trans}
Let $X$ be an irreducible shift of finite type of period $p$, partitioned into $p$ cyclically moving classes $X_i$, $0 \leq i \leq p -1$. Then any $\fT_X$-equivalence class is contained in a single cyclically moving class. That is, if $x \in X_i$ and $x' \in X_j$ with $(x,x') \in \fT_X$, then $i=j$.
\end{lemma}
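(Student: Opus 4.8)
The plan is to encode the cyclically moving classes as a single continuous $\Z/p\Z$-valued function and then exploit the fact that such a function is determined by only finitely many coordinates, using shift-equivariance to relocate the determining window into the region where two asymptotic points agree.

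First, I would define the grading $\rho: X \to \Z/p\Z$ by setting $\rho(x) = i$ precisely when $x \in X_i$. Since the classes $X_i$ partition $X$ and each is clopen, $\rho$ is continuous. A continuous map from a shift space into a finite discrete set is locally constant, so by compactness there is some $M \geq 0$ such that $\rho(x)$ depends only on the central block $x_{[-M,M]}$; concretely, each $X_i$ is a finite union of cylinders whose supports fit in a common window $[-M,M]$. The defining relation $\sigma(X_i) = X_{i+1 \bmod p}$ translates into the cocycle identity $\rho(\sigma x) = \rho(x) + 1 \pmod p$, and hence $\rho(\sigma^m x) = \rho(x) + m \pmod p$ for all $m \in \Z$.

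Next, given $(x,x') \in \fT_X$, I would fix $N \geq 1$ with $x_{[-N,N]^c} = x'_{[-N,N]^c}$ and consider the window $[N+1,\, N+2M+1]$, which lies entirely to the right of $N$ and therefore inside the region where $x$ and $x'$ coincide. Setting $m = N+M+1$, the block $(\sigma^m x)_{[-M,M]}$ equals $x_{[N+1,\,N+2M+1]}$, which in turn equals $x'_{[N+1,\,N+2M+1]} = (\sigma^m x')_{[-M,M]}$. Since $\rho$ depends only on the central $[-M,M]$ block, this forces $\rho(\sigma^m x) = \rho(\sigma^m x')$. Applying the cocycle identity and cancelling the common term $m$ yields $\rho(x) = \rho(x')$, i.e. $x$ and $x'$ lie in the same cyclically moving class, which is exactly the claim.

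The only real subtlety — and the point that motivates the whole argument — is that the finite window determining $\rho$ is centred at the origin and may overlap the symmetric-difference region $[-N,N]$ of the two asymptotic points, so one cannot compare $\rho(x)$ and $\rho(x')$ directly. The shift by $m = N+M+1$ is precisely what carries the determining window out of $[-N,N]$ and into the agreement region, at the cost of a common shift that cancels thanks to the cocycle identity. Everything else is routine, and no finite-type hypothesis beyond the existence of the cyclic decomposition, already granted in the statement, is needed.
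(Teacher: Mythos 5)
Your proof is correct, but it takes a genuinely different route from the paper's. The paper's proof first passes, without loss of generality, to an edge-shift presentation of $X$; in an edge shift the cyclic decomposition is induced by a partition of the alphabet (the vertex classes of the underlying graph), so membership in a cyclically moving class is determined by a \emph{single} coordinate, and the conclusion is immediate because two $\fT_X$-equivalent points agree in all but finitely many coordinates. Your argument replaces that recoding step with the observation that the clopen classes define a continuous, hence locally constant, grading $\rho\colon X \to \Z/p\Z$ satisfying $\rho(\sigma x) = \rho(x) + 1 \pmod p$, and you then use the shift to carry the finite determining window into the agreement region. In effect, the paper uses a conjugacy to force the window size to zero, while you keep a general window of radius $M$ and compensate with the cocycle identity. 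What your version buys: it avoids the implicit check that a conjugacy transports both the Gibbs relation and the cyclic decomposition to the edge shift, and it never invokes the finite-type hypothesis, so it applies verbatim to any shift space admitting a clopen cyclic decomposition. What the paper's version buys: brevity, since the edge-shift normalization and its compatibility with these structures are standard facts from \cite{lind1995introduction} that the paper uses throughout anyway. Both proofs rest on the same underlying mechanism --- class membership is locally determined, and asymptotic points eventually agree --- so either is a sound proof of the lemma.
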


\begin{proof}
Suppose without loss of generality that $X$ is an edge shift with alphabet $\cA = \cB_1(X)$. Then $\cA$ can be partitioned into $p$ subsets  $\cA_i$, $0 \leq i \leq p-1$, corresponding to the cyclically moving classes $X_i$, such that for any $x \in X$, $x \in X_i$ if and only if $x \in \cA_i$. Thus for $x,x' \in X$, if $x_n = x'_n$ for some $n$, then $x,x'$ are in the same cyclically moving class. But if $(x,x') \in \fT_X$ then $x_n = x'_n$ for all but finitely many $n$, so in particular $x$ and $x'$ are in the same cyclically moving class.
\end{proof}

\begin{lemma}[Dobrushin theorem for irreducible shifts of finite type]\label{dobrushin-irred-sft}
Let $X$ be an irreducible shift of finite type and $f\colon X \to \R$  be a potential with summable variation, that is $f \in SV(X)$. Then every $\sigma$-invariant Gibbs measure for $f$ is an equilibrium measure for $f$.
\end{lemma}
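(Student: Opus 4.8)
The plan is to reduce to the classical Dobrushin theorem (Theorem \ref{dobrushin}) by passing to the cyclic decomposition of $X$, exactly as in the proof of Proposition \ref{yoo-sv}. When $X$ has period $1$ it is mixing, hence strongly irreducible, and the claim is immediate from Theorem \ref{dobrushin}; so the content lies in the case of period $p > 1$, where $X$ is not mixing and the classical theorem does not apply directly. Write $X = \sqcup_{i=0}^{p-1} X_i$ for the cyclically moving classes, so that $(X_0, \sigma^p)$ is mixing (hence strongly irreducible) and $R_p f = \sum_{j=0}^{p-1} f \circ \sigma^{-j} \in \SV(X_0)$, as established in Proposition \ref{yoo-sv}. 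Recall from there the bijection $\mu \mapsto \mu' = p\,\mu|_{X_0}$ between $\sigma$-invariant measures on $X$ and $\sigma^p$-invariant measures on $X_0$, together with the pressure identity showing that $\mu$ is an equilibrium measure for $f$ on $X$ if and only if $\mu'$ is an equilibrium measure for $R_p f$ on $(X_0, \sigma^p)$.

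Given these reductions, it suffices to show that if $\mu$ is a $\sigma$-invariant Gibbs measure for $f$, then $\mu'$ is a Gibbs measure for $R_p f$ on $(X_0, \sigma^p)$. Once this is done, the classical Dobrushin theorem applied to the strongly irreducible system $(X_0, \sigma^p)$ yields that $\mu'$ is an equilibrium measure for $R_p f$, and the pressure identity of Proposition \ref{yoo-sv} immediately gives that $\mu$ is an equilibrium measure for $f$, as claimed.

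The crux is therefore to transfer the Gibbs property from $\mu$ to $\mu'$, and here I would lean on Lemma \ref{lemma-period-trans}. That lemma guarantees that every $\fT_X$-equivalence class lies in a single $X_i$; consequently the tail relation $\fT_{X_0}$ of the system $(X_0, \sigma^p)$ is precisely $\fT_X \cap (X_0 \times X_0)$, and every holonomy of $\fT_{X_0}$ is the restriction to a Borel subset of $X_0$ of a holonomy of $\fT_X$ (no such holonomy can escape $X_0$). Since $\mu' = p\,\mu|_{X_0}$ differs from $\mu$ on $X_0$ only by the global constant $p$, this factor cancels in every Radon--Nikodym derivative taken along a holonomy supported in $X_0$, so $\mu'$ is $\fT_{X_0}$-nonsingular with $D_{\mu',\fT_{X_0}} = D_{\mu,\fT_X}$ on $X_0$. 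A short reindexing then shows that the cocycle induced by $R_p f$ on $(X_0, \sigma^p)$ agrees with $\phi_f$ on $\fT_{X_0}$: writing $\phi_{R_p f}(x,y) = \sum_{m\in\Z}\sum_{j=0}^{p-1}\left[ f(\sigma^{pm-j}y) - f(\sigma^{pm-j}x) \right]$ and noting that $pm-j$ runs over each integer exactly once as $(m,j)$ ranges over $\Z \times \{0,\dots,p-1\}$, this equals $\phi_f(x,y)$. Combining these identities gives $D_{\mu',\fT_{X_0}}(x,\psi(x)) = \exp\bigl(\phi_{R_p f}(x,\psi(x))\bigr)$ for $\mu'$-a.e.\ $x$ along any holonomy $\psi$, so $\mu'$ is Gibbs for $R_p f$.

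I expect the main obstacle to be bookkeeping rather than any deep difficulty: specifically, verifying cleanly that the holonomies of $\fT_{X_0}$ are exactly the $X_0$-supported holonomies of $\fT_X$, and that the Radon--Nikodym and cocycle data match up under the normalization by $p$. Lemma \ref{lemma-period-trans} is precisely what renders this routine, since without knowing that the tail relation respects the cyclic partition one could not identify $\fT_{X_0}$ with the restriction of $\fT_X$, and the entire reduction would break down.
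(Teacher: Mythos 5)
Your proof is correct and follows essentially the same route as the paper's: cyclic decomposition into mixing classes, transfer of the Gibbs property to $\mu' = p\,\mu|_{X_0}$ via Lemma \ref{lemma-period-trans}, the classical Dobrushin theorem (Theorem \ref{dobrushin}) applied to $(X_0,\sigma^p)$, and the pressure identity from Proposition \ref{yoo-sv}. Your explicit verifications---that $\phi_{R_p f}$ agrees with $\phi_f$ on $\fT_{X_0}$ by reindexing, and that the normalization constant $p$ cancels in the Radon--Nikodym derivatives---merely fill in details the paper asserts without proof (``$\mu'$ inherits the Gibbsianness of $\mu$'').
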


\begin{proof}
Let $X$ have period $p$ and let $X_0, \dots, X_{p-1}$ be the cyclically moving classes of $X$ as in the proof of Proposition \ref{yoo-sv}. Note that each $X_i$, $0 \leq i \leq p-1$ is $\fT_X$-invariant and that we can regard each one as a mixing shift of finite type with alphabet a subset of $\cB_p(X)$, so that it is meaningful to speak of the Gibbs relations $\fT_{X_i}$. Moreover, since each class $X_i$ is $\fT_{X}$-invariant, we have for each $i$ that $\fT_{X_i}$ is simply the restriction of $\fT_X$ to $X_i \times X_i$.

Let $\mu$ be a Gibbs measure for $f$ and  $\mu'$ be the normalized restriction of $\mu$ of $X$ to $X_0$, i.e., $\mu'(E) = p \mu(E)$ for any Borel set $E \subseteq X_0$. Lemma \ref{lemma-period-trans} shows that every holonomy $\psi: A \to B$ of $\fT_X$  restricts to a holonomy $\psi_0 : A \cap X_0 \to B \cap X_0$ of the relation $\fT_{X_0}$. Moreover, every holonomy of $\fT_{X_0}$ clearly arises as such a restriction: if $\psi_0$ is a holonomy of $\fT_{X_0}$, $\psi_0 = \psi|_{X_0}$ where $\psi|_{X_i} = \sigma^i \circ \psi_0 \circ \sigma^{-i}$. Therefore the measure $\mu'$ inherits the Gibbsianness of $\mu$, with respect to $R_p f =  \sum_{j=0}^{p-1} f \circ \sigma^j$. Since each $X_i$, $0 \leq i \leq p-1$ is a mixing shift of finite type, therefore strongly irreducible, Theorem \ref{dobrushin} shows that $\mu'$ is an equilibrium measure for $R_p f$. As in the proof of Proposition \ref{yoo-sv}, we have 
\[
h_X(\mu) + \int_X f \, d\mu = \frac{1}{p} \left( h_{X_0}(\mu') + \int_{X_0} R_p f \, d\mu'         \right),
\]
which concludes that $\mu$ is an equilibrium measure for $f$.

%But then $\mu$ is an equilibrium measure for $f$ (see the proof of Proposition \ref{yoo-sv} for the pressure formula), concluding the proof.
\end{proof}

The next corollary generalizes Proposition \ref{prop-pres-gibbs} from almost invertible codes to finite-to-one codes, in the case of a cocycle induced by a sufficiently regular potential. As discussed in \S \ref{pres-gibbs-sec}, the proof technique that we used for Proposition \ref{prop-pres-gibbs} does not generalize to higher degrees. To obtain Corollary \ref{irred-fin-1}, we apply Theorem \ref{sofic-lf}, even though the statement of Corollary \ref{irred-fin-1} does not mention equilibrium measures explicitly.

\begin{corollary}[preservation of Gibbsianness for finite-to-one factor codes]\label{irred-fin-1}
Let $X$ be an irreducible shift of finite type and $Y$ an irreducible sofic shift. Let $\pi: X \to Y$ be a finite-to-one factor code, let $f\colon Y \to \R$  be a potential with summable variation, that is, $f \in SV(Y)$, and let $\mu$ be a $\sigma$-invariant Gibbs measure for $f \circ \pi$. Then $\nu = \pi_* \mu$ is a Gibbs measure for $f$.
\end{corollary}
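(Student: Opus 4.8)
The plan is to route through equilibrium measures rather than to lift holonomies directly; as noted in the second remark following Proposition \ref{prop-pres-gibbs}, the holonomy-lifting technique of that proposition breaks down at degree greater than one, because preimages of asymptotic doubly transitive points need no longer be asymptotic. The three-step strategy is: convert the Gibbs hypothesis on $\mu$ into an equilibrium statement on $X$, transfer that equilibrium statement to $\nu$ on $Y$, and then convert back to a Gibbs statement on $Y$ via the sofic Lanford-Ruelle theorem.

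First I would note that $f \circ \pi \in \SV(X)$, as recorded in the proof of Lemma \ref{lift-eqm}, so that the theorems below apply. Since $\mu$ is a $\sigma$-invariant Gibbs measure for $f \circ \pi$ on the irreducible shift of finite type $X$, Lemma \ref{dobrushin-irred-sft} (the Dobrushin theorem for irreducible shifts of finite type) shows that $\mu$ is an equilibrium measure for $f \circ \pi$; in particular $h(\mu) + \int_X f \circ \pi \, d\mu = P_X(\sigma, f \circ \pi)$. Note that the period-$p$ decomposition inside Lemma \ref{dobrushin-irred-sft} is exactly what lets us dispense with any mixing assumption on $X$.

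Next I would transfer this to $\nu = \pi_* \mu$, which is automatically $\sigma$-invariant. Because $\pi$ is finite-to-one, the Abramov-Rokhlin formula \cite{adler1963skew} gives $h(\mu) = h(\nu)$, and the change-of-variables formula gives $\int_X f \circ \pi \, d\mu = \int_Y f \, d\nu$. Combined with the pressure identity $P_X(\sigma, f \circ \pi) = P_Y(\sigma, f)$ established in the proof of Lemma \ref{lift-eqm}, this yields
\[
h(\nu) + \int_Y f \, d\nu = h(\mu) + \int_X f \circ \pi \, d\mu = P_X(\sigma, f \circ \pi) = P_Y(\sigma, f),
\]
so $\nu$ is an equilibrium measure for $f$ on $Y$. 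Finally, since $Y$ is an irreducible sofic shift and $f \in \SV(Y)$, Theorem \ref{sofic-lf} (the sofic Lanford-Ruelle theorem) applies to the equilibrium measure $\nu$ and concludes that $\nu$ is a Gibbs measure for $f$, as desired.

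The substance here is almost entirely in assembling results already proved, so there is no single hard calculation to grind through. The conceptual point worth flagging is that the argument never constructs any direct relationship between the Radon-Nikodym cocycles on $X$ and on $Y$: it passes through the purely thermodynamic quantities (entropy and pressure), and this is precisely what allows it to evade the degree-greater-than-one obstruction that blocks a naive adaptation of Proposition \ref{prop-pres-gibbs}. The only routine verifications are the applicability of the Abramov-Rokhlin formula, which needs only that the fibers of $\pi$ are finite, and the reuse of the pressure identity, both of which are immediate from Lemma \ref{lift-eqm}.
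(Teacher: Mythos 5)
Your proof is correct and follows essentially the same route as the paper's: Gibbsianness of $\mu$ gives an equilibrium measure via Lemma \ref{dobrushin-irred-sft}, the equilibrium property transfers to $\nu = \pi_*\mu$ because $\pi$ is finite-to-one, and Theorem \ref{sofic-lf} converts back to Gibbsianness on $Y$. The only cosmetic difference is that where the paper cites Lemma 4.4 of \cite{yoo2018decomposition} for the transfer step, you re-derive it from the Abramov-Rokhlin formula and the pressure identity $P_X(\sigma, f \circ \pi) = P_Y(\sigma, f)$, exactly as in the paper's own proof of Lemma \ref{lift-eqm}.
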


\begin{proof}
First note that $\mu$ is an equilibrium measure for $f \circ \pi$ by Lemma \ref{dobrushin-irred-sft}. Since $\pi$ is finite-to-one, $\nu$ is an equilibrium measure for $f$ (\cite{yoo2018decomposition}, Lemma 4.4), so by Theorem \ref{sofic-lf}, $\nu$ is Gibbs for $f$.
\end{proof}

While Corollary \ref{irred-fin-1} may look like it generalizes Proposition \ref{prop-pres-gibbs} in that $\mu$ need no longer be ergodic and $\pi$ need no longer have degree one. However, the cocycle in Proposition \ref{prop-pres-gibbs} need not a priori be induced by the pullback of a potential with summable variation. Corollary \ref{irred-fin-1} therefore generalizes Proposition \ref{prop-pres-gibbs} only in this special case.

\begin{theorem}[Dobrushin theorem for irreducible sofic shifts]\label{sofic-dobrushin}
Let $Y$ be an irreducible sofic shift and let $f\colon Y \to \R$  be a potential with summable variation, that is, $f \in SV(Y)$. Let $\nu$ be a $\sigma$-invariant Gibbs measure for $f$. Then $\nu$ is an equilibrium measure for $f$.
\end{theorem}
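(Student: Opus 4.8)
The plan is to mirror the strategy of Theorem \ref{sofic-lf}, but to run the lifting and pushforward machinery in the reverse direction. Fix an irreducible shift of finite type $X$ and an almost invertible factor code $\pi \colon X \to Y$, for instance the minimal right-resolving presentation, and observe that $f \circ \pi \in \SV(X)$ since $\pi$ is, after a conjugacy of $X$, a one-block code. Because the sets of Gibbs and of equilibrium measures are both compact and convex with ergodic extreme points, the same convexity argument that concludes the proof of Theorem \ref{sofic-lf} reduces us to the case that $\nu$ is ergodic.

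Assume then that $\nu$ is ergodic, and grant for the moment that $\nu$ has full support. Then $\nu$ is fully supported, $\sigma$-invariant, ergodic, $\fT_Y$-nonsingular, and Gibbs for $f$, so the hypotheses of Proposition \ref{sofic-gibbs-pushfwd} are met. Choosing, by Lemma \ref{rel-kry-bog}, a $\sigma$-invariant measure $\mu$ on $X$ with $\pi_* \mu = \nu$, that proposition gives that $\mu$ is $\fT_X$-nonsingular and Gibbs for $\phi_f \circ (\pi \times \pi)$. Since $\pi \sigma = \sigma \pi$, this cocycle coincides with $\phi_{f \circ \pi}$, so $\mu$ is a $\sigma$-invariant Gibbs measure for $f \circ \pi$ on the irreducible shift of finite type $X$. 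By Lemma \ref{dobrushin-irred-sft}, $\mu$ is therefore an equilibrium measure for $f \circ \pi$. Finally, since $\pi$ is finite-to-one, the Abramov-Rokhlin formula yields $h(\mu) = h(\nu)$, and the pressure identity $P_X(\sigma, f \circ \pi) = P_Y(\sigma, f)$ established in the proof of Lemma \ref{lift-eqm} holds; combining these gives $h(\nu) + \int_Y f \, d\nu = P_Y(\sigma, f)$, so $\nu = \pi_* \mu$ is an equilibrium measure for $f$.

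The main obstacle is the deferred claim that an ergodic $\fT_Y$-nonsingular $\sigma$-invariant measure on an irreducible sofic shift has full support. In Theorem \ref{sofic-lf} this was free, via Proposition \ref{yoo-sv}, because the relevant measure was already an equilibrium measure; here $\nu$ is not yet known to be one, so I would instead argue from the cyclic structure. Let $p$ be the period of $Y$ and decompose $Y = \bigsqcup_{i=0}^{p-1} Y_i$ into its cyclically moving classes, so that $\sigma(Y_i) = Y_{i+1 \bmod p}$ and each $(Y_i, \sigma^p)$ is a mixing sofic shift. Each $(Y_i, \sigma^p)$ is a continuous factor of a cyclically moving class $(X_i, \sigma^p)$ of $X$, which is a mixing, hence strongly irreducible, shift of finite type; as strong irreducibility passes to factors, each $(Y_i, \sigma^p)$ is strongly irreducible. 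A sofic analogue of Lemma \ref{lemma-period-trans}, which I would prove by observing that the phase of a point is determined by its behaviour arbitrarily far from the origin so that agreement outside a finite window forces equality of phase, shows that each $Y_i$ is $\fT_Y$-invariant; hence $\fT_{Y_0}$ is the restriction of $\fT_Y$ to $Y_0$, and the normalized restriction $\nu' = p\, \nu|_{Y_0}$ is $\fT_{Y_0}$-nonsingular. Applying Proposition \ref{strong-irred-support} to the strongly irreducible shift $(Y_0, \sigma^p)$ shows that $\nu'$ has full support in $Y_0$, and averaging, $\nu = \tfrac{1}{p} \sum_{i=0}^{p-1} \sigma^i_* \nu'$, gives full support in $Y$. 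The delicate ingredient is thus the cyclic decomposition of the sofic shift together with the $\fT_Y$-invariance of its classes, which need more care than in the finite-type case because the phase of a sofic point cannot be read off a single coordinate.
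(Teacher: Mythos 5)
Your overall strategy coincides with the paper's: reduce to ergodic $\nu$ by convexity, lift $\nu$ through the minimal right-resolving presentation to a Gibbs measure $\mu$ for $f \circ \pi$ via Lemma \ref{rel-kry-bog} and Proposition \ref{sofic-gibbs-pushfwd}, conclude from Lemma \ref{dobrushin-irred-sft} that $\mu$ is an equilibrium measure for $f \circ \pi$, and push the equilibrium property back down through the finite-to-one code. (Where the paper cites Lemma 4.4 of \cite{yoo2018decomposition} for this last step, you re-derive it from the Abramov-Rokhlin formula and the pressure identity in the proof of Lemma \ref{lift-eqm}; this is the same argument and is correct.) You have also put your finger on a real issue that the paper's proof passes over silently: Proposition \ref{sofic-gibbs-pushfwd} hypothesizes that $\nu$ is fully supported, while Theorem \ref{sofic-dobrushin} does not, so full support genuinely requires an argument.

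Unfortunately, your argument for full support has a genuine gap: an irreducible sofic shift does not in general carry the cyclic structure you invoke, namely a partition into $p$ disjoint, $\fT_Y$-invariant classes, cyclically permuted by $\sigma$ and mixing under $\sigma^p$, with $p$ the period in the sense of Definition \ref{defn-aper}. The odd shift $Y \subset \{a,b\}^{\Z}$ (consecutive $b$'s separated by an odd number of $a$'s) is a counterexample: it is irreducible of period $1$, since the fixed point $a^{\infty}$ has least period $1$, yet it is not mixing, since $b a^n b \in \cB(Y)$ only for odd $n$; so already the claim that each $(Y_i, \sigma^p)$ is mixing fails when $p=1$. The underlying problem is your invariance claim, that agreement outside a finite window forces equality of phase: in the odd shift, the point with a single $b$ at position $0$ and the point with a single $b$ at position $1$ are $\fT_Y$-equivalent, yet their $b$'s lie on opposite parities, and both are $\fT_Y$-equivalent to $a^{\infty}$, which has no well-defined phase at all. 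There is no sofic analogue of Lemma \ref{lemma-period-trans}.

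The repair is to decompose the cover rather than $Y$. Let $q$ be the period of $X$ (not of $Y$; in the odd shift example these differ), let $X = \sqcup_{i=0}^{q-1} X_i$ be the cyclically moving classes of $X$, and set $Y_i = \pi(X_i)$. These sets satisfy $\sigma(Y_i) = Y_{i+1 \bmod q}$ and cover $Y$, but they need be neither disjoint nor $\fT_Y$-invariant, and this does not matter. Each $(Y_i, \sigma^q)$ is a factor of the mixing shift of finite type $(X_i, \sigma^q)$, hence strongly irreducible (this step of your argument is sound). By $\sigma$-invariance, $\nu(Y_i) = \nu(Y_0) \geq 1/q > 0$; and since two points of $Y_i$ are $\fT_{Y_i}$-equivalent exactly when they agree outside a finite window, $\fT_{Y_i}$ is the restriction of $\fT_Y$ to $Y_i \times Y_i$, so for any $\nu$-null $A \subseteq Y_i$ we have $\fT_{Y_i}(A) \subseteq \fT_Y(A)$, and the normalized restriction of $\nu$ to $Y_i$ is $\fT_{Y_i}$-nonsingular. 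Proposition \ref{strong-irred-support} then gives $\mathrm{supp}(\nu) \supseteq Y_i$ for every $i$, hence $\nu$ has full support. With this substitution your proof is complete, and it fills in a step that the paper itself leaves implicit.
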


\begin{proof}
First suppose that $\nu$ is ergodic. Let $\pi: X \to Y$ be the minimal right-resolving presentation of $Y$. By Lemma \ref{sofic-gibbs-pushfwd}, there exists an ergodic $\sigma$-invariant Gibbs measure $\mu$ for $f \circ \pi$ with $\pi_* \mu = \nu$. By Lemma \ref{dobrushin-irred-sft}, $\mu$ is an equilibrium measure for $f \circ \pi$. Finally, again by Lemma 4.4 in \cite{yoo2018decomposition}, $\nu$ is an equilibrium measure for $f$. The result for general (not necessarily ergodic) $\nu$ follows from the ergodic case by compactness and convexity as in the proof of Theorem \ref{sofic-lf}.
\end{proof}

Taken together, Theorems \ref{sofic-lf} and \ref{sofic-dobrushin} show that for a potential with summable variations on an irreducible sofic shift, the equilibrium measures are precisely the $\sigma$-invariant Gibbs measures.

Finally, we can use Theorem \ref{sofic-dobrushin} to generalize Proposition \ref{sofic-gibbs-pushfwd} from almost invertible to finite-to-one codes, in the same special case for which Corollary \ref{irred-fin-1} generalizes Proposition \ref{prop-pres-gibbs}.

\begin{corollary}[lifting Gibbs measures through almost invertible factor codes]\label{lift-gibbs-fin-1}
Let $X$ be an irreducible shift of finite type and $Y$ an irreducible sofic shift. Let $\pi: X \to Y$ be a finite-to-one factor code, let $f\colon Y \to \R$  be a potential with summable variation, that is, $f \in SV(Y)$, and let $\nu$ be a $\sigma$-invariant Gibbs measure for $f$. Then there exists a $\sigma$-invariant Gibbs measure $\mu$ for $f \circ \pi$ with $\pi_* \mu = \nu$.
\end{corollary}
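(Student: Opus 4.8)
The plan is to compose the three black-box results already in hand, traversing them in the direction dual to Corollary \ref{irred-fin-1}: first convert the Gibbs property into the equilibrium property downstairs on $Y$, then lift the resulting equilibrium measure to $X$, and finally recover Gibbsianness upstairs via the classical Lanford-Ruelle theorem. In effect this is the ``adjoint'' of Corollary \ref{irred-fin-1}, where the same machinery was run in the opposite order.

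First I would apply Theorem \ref{sofic-dobrushin}. Since $Y$ is an irreducible sofic shift, $f \in \SV(Y)$, and $\nu$ is a $\sigma$-invariant Gibbs measure for $f$, the sofic Dobrushin theorem yields that $\nu$ is an equilibrium measure for $f$. This step already subsumes the non-ergodic case, since the proof of Theorem \ref{sofic-dobrushin} treats arbitrary $\sigma$-invariant $\nu$ by compactness and convexity; consequently no case split on ergodicity is needed here.

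Next I would invoke Lemma \ref{lift-eqm}, which lifts equilibrium measures through finite-to-one codes. Because $\pi\colon X \to Y$ is a finite-to-one factor code from an irreducible shift of finite type and $\nu$ is now known to be an equilibrium measure for $f$, that lemma furnishes an equilibrium measure $\mu$ for $f \circ \pi \in \SV(X)$ satisfying $\pi_* \mu = \nu$, and being an equilibrium measure, $\mu$ is $\sigma$-invariant. I would then apply the classical Lanford-Ruelle theorem (Theorem \ref{lf}): as $X$ is a shift of finite type and $f \circ \pi \in \SV(X)$, every equilibrium measure for $f \circ \pi$ is a Gibbs measure for $f \circ \pi$, so $\mu$ is. This produces the desired $\sigma$-invariant Gibbs measure $\mu$ for $f \circ \pi$ with $\pi_* \mu = \nu$.

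I do not anticipate any genuine obstacle; the entire content lies in recognizing that these three previously established results chain together in exactly this order. The only points requiring mild care are that Lemma \ref{lift-eqm} yields a measure that is $\sigma$-invariant on the nose (which holds, since equilibrium measures are $\sigma$-invariant by definition), and that Theorem \ref{lf} is applied to $X$ viewed merely as a shift of finite type, so that no irreducibility or mixing hypothesis is invoked at the final step.
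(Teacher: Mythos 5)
Your proposal is correct and follows exactly the paper's own proof: apply Theorem \ref{sofic-dobrushin} to convert $\nu$ into an equilibrium measure for $f$, lift it via Lemma \ref{lift-eqm} to an equilibrium measure $\mu$ for $f \circ \pi$ with $\pi_*\mu = \nu$, and conclude by the classical Lanford-Ruelle theorem (Theorem \ref{lf}) that $\mu$ is Gibbs. Your added remarks---that no ergodicity case split is needed and that Theorem \ref{lf} requires no irreducibility---are accurate and consistent with the paper's argument.
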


\begin{proof}
By Theorem \ref{sofic-dobrushin}, $\nu$ is an equilibrium measure for $f$. By Lemma \ref{lift-eqm}, there is an equilibrium measure $\mu$ for $f \circ \pi$ with $\pi_* \mu = \nu$. By Theorem \ref{lf}, $\mu$ is a Gibbs measure for $f \circ \pi$, and is certainly $\sigma$-invariant.
\end{proof}

In closing, we note that it is an open problem to determine the existence of a finite-to-one factor code from a given shift of finite type $X$ onto a given sofic shift $Y$, as in the hypotheses of Corollaries \ref{irred-fin-1} and \ref{lift-gibbs-fin-1}. Equal entropy is necessary, but there are additional necessary conditions; see \cite{lind1995introduction}, \S 12.2.

\section*{Acknowledgments}

We thank Tom Meyerovitch and Brian Marcus for their generous advice throughout the course of this work. We also thank the organizers and participants of the West Coast Dynamics Seminar and the Ottawa Mathematics Conference 2020 for their helpful questions and comments in response to presentations based on earlier versions of this paper.

\end{document}